\newtheorem{theorem}{Theorem}[section]
\newtheorem{proposition}[theorem]{Proposition}
\newtheorem{lemma}[theorem]{Lemma}
\theoremstyle{definition}
\theoremstyle{remark}
\numberwithin{equation}{section}
\theoremstyle{plain}
\theoremstyle{definition}
\def \r {\mathbb{R}}
\def \c {\mathbb{C}}
\def \p {\mathbb{P}}
\def \q {\mathcal{Q}}
\def \u {\mathcal{U}}
\def \f {\mathcal{F}}
\def \h {\mathbb{H}}
\def \mcp {\mathcal{P}}
\def \mfg {\mathfrak{g}}
\def \vs {\vskip 2mm}
\def \dd[#1,#2]{\frac{\partial #1}{\partial #2}}
\def \D {\triangle}
\def \HH {\mathcal{H}}
\def \and {\; \text{and}\;}
\def \compvar[#1,#2]{\HH^{k,\alpha}_{#1, #2}(\D, \u_{#2})}
\def \mod {\; \text{mod}\;}
\def \naba {\nabla^A}
\def \dbdth {\frac{\partial \;\;}{\partial \theta}}
\theoremstyle{remark}
\numberwithin{theorem}{section}
\def\v@rt#1#2{\m@th\ooalign{$\hfil#1|\hfil$\crcr$#1#2$}}
\def\captr{\mathrel{\mathpalette\v@rt\cap}}
\begin{document}
\title[Left-invariant Tubes on $SU(2)$]{Left-invariant Grauert Tubes on $SU(2)$}
\author[Vaqaas Aslam]{Vaqaas Aslam} 
\author[Daniel Burns Jr.]{Daniel M Burns Jr.}
\author[Daniel Irvine]{Daniel Irvine}
\address{Department of Mathematics, University of Michigan, Ann Arbor, MI 48109-1043, USA.}
\email{svaslam@umich.edu}
\email{dburns@umich.edu}
\email{dirvine@umich.edu}

\begin{abstract} Let $M$ be a real analytic Riemannian manifold. An adapted complex structure on $TM$ is a complex structure on a neighborhood of the zero section such that the leaves of the Riemann foliation are complex submanifolds. This structure is called entire if it may be extended to the whole of $TM$. We call such manifolds Grauert tubes, or simply tubes. We consider here the case of $M = G$ a compact connected Lie group with a left-invariant metric, and try to determine for which such metrics the associated tube is entire. It is well-known that the Grauert tube of a bi-invariant metric on a Lie group is entire. The case of the smallest group  $SU(2)$ is treated completely, thanks to the complete integrability of the geodesic flow for such a metric, a standard result in classical mechanics. Along the way we find a new obstruction to tubes being entire which is made visible by the complete integrability.

\end{abstract}

\maketitle
%
%
%
%
%
%
%
%
%
%
%
%
\section{Introduction}\label{sec:intro}
A real analytic manifold $M$ has a natural complexification to a germ of complex manifold $M_{\c}$ together with a conjugation fixing the original manifold. If we are given a real analytic Riemannian metric $g$ on $M$, we get a canonical complex structure, called the {\em adapted complex structure}, on the tangent bundle $TM$ in a neighborhood of the zero section $0 \subset TM$ such that it verifies several compatibility conditions with $(M, g)$. Such a complex manifold we call a Grauert tube, or simply tube, and say it is entire if the adapted complex structure can be extended to all of $TM$. Among the compatible properties of the tube structure is that the length-squared function $\tau = \|v\|^2, v \in TM$ is a strictly plurisubharmonic function on the tube, and its square root $u =\sqrt{\tau}$ is a solution of the homogeneous complex Monge-Amp\`ere equation (HCMA) on the complement of the zero section, whose associated characteristic foliation is the geodesic foliation, or Riemann foliation, in $TM$. See \cite{ls, s, gs1, gs2}.

\vs

Entire tubes appear to be rare, and all constructions of them so far are closely related to homogeneous examples \cite{a1, s}. Lempert and Sz\H{o}ke \cite{ls} have shown that a necessary condition for the tube of $(M, g)$ to be entire is that the sectional curvatures of $g$ be non-negative. This is far from sufficient, however. Sz\H{o}ke has shown that there is only a two parameter family of metrics of revolution on $S^2$ which have entire tubes, whereas many of these manifolds have strictly positive curvature. Furthermore, for $M$ of two dimensions, Aguilar \cite{a2} has found an infinite sequence of conditions generalizing that of Lempert-Sz\H{o}ke to higher order invariants which give necessary and sufficient conditions that a tube be entire. Unfortunately, these are very difficult to interpret in specific examples. In particular, it is not known whether the conditions are effectively finite, i.e., are redundant after a certain degree. One of the problems in estimating whether a given metric has an entire tube is that the complex structure on the tube viewed as a subset of the tangent bundle $TM$ is not very explicit. Sometimes the original real manifold may have an obvious ``large" complexification which presents itself as a natural guess for an entire tube complexification, but it is hard to say how far out in such a complex manifold the tube's structures extend: for example, the solution $u$ of HCMA. 

\vs

If $M = G$, a compact, connected Lie group, then there is a natural ``large" complexification, namely the complex group $G_{\c}$. If the metric $g$ is bi-invariant on $G$, then Sz\H{o}ke \cite{s} has shown that the tube is entire and equal to $G_{\c}$. On the other hand, it is well-known that some left-invariant metrics on $S^3 = SU(2)$ have scalar curvature $R < 0$, which shows they cannot have entire tubes, by Lempert-Sz\H{o}ke's condition. We try to address here the question of whether a left-invariant metric has an entire Grauert tube. Our first lemmas state that if an invariant metric has an entire tube, then the tube is indeed biholomorphic to $G_{\c}$. Given this, to see whether the tube is entire, we can work on the complex group $G_{\c}$ and try to determine whether we can follow any of the structure on the tube which would come from its identification with $TM$ intrinsically on the complex group, and  decide whether these complex geometric features present obstructions to the existence, e.g., of the solution $u$ of HCMA on $G_{\c}$. The existence of $u$ implies that $G_{\c} \setminus G$ is foliated by complex curves which are the complexifications of the real $g$-geodesics on $G$. But we can complexify the metric $g$ to a holomorphic complex metric $g_{\c}$ on $G_{\c}$, and consider its holomorphic geodesic flow on $T^{1,0}G_{\c}$, the holomorphic tangent bundle of $G_{\c}$. The complex conjugation on $G_{\c}$ acts on everything here, and we are left with the complex geometric question: do the complexified real geodesics foliate $G_{\c} \setminus G$? An interesting feature of these examples studied here is that, in general, there are two independent sorts of obstructions to extending a tube: {\em geometric} ones related to curvature and focusing, as in section \ref{sec:U2cases}, and intrinsic complex {\em analytic} questions of convergence, related to the incompleteness of the geodesic flow in the holomorphic case, as in section \ref{sec:complexintegrals}. 
\vs

It is a well-known observation of Arnol'd that the geodesic flow for a left-invariant metric on a compact Lie group is equivalent to the motion of a free rigid body whose frame is parametrized by $G$ about its center of mass (see \cite{via}, Appendix 2, for example). At this point we specialize to the case of $G = SU(2)$ (or $SO(3)$), because in this case, the equations of motion of the free rigid body in $\r^3$ are completely integrable, and the first integrals are simply quadratic in convenient algebraic coordinates. Our approach here is to complexify the integrals of the free rigid body to the space $T^{1,0}G_{\c}$ to study complex geodesics on $G_{\c}$. Complete integrability enables us to draw enough information about the geodesic flow in these cases to settle whether the Grauert tubes are entire or not. 
\vs
More specifically, the sections are divided as follows. In section \ref{sec:GTlemmas} we review quickly some basics of Grauert tubes, and recall from \cite{s3} the result that if a compact group $G$ of isometries acts on the real analytic Riemannian manifold $(M, g)$, and the Grauert tube $M_{\c}$ of $(M, g)$ is entire, then $G_{\c}$ acts holomorphically on the entire tube. (The authors thank the referee for pointing out \cite{s3} for this result.) In particular, if $G$ acts transitively on $M$, with isotropy group $H$ at a reference point $z_0 \in M$, then the entire Grauert tube is biholomorphic to $G_{\c}/H_{\c}.$ We make some reference to finite radius versions of these statements, but the details are largely left to the reader.

\vs
In section {\ref{sec:Gcgeom}} we consider the relation of the real geometry of $G, g$ and the holomorphic geometry of $G_{\c}, g_{\c}$. We recast the criterion of \cite{ls} for existence of the adapted complex structure into statements about the exponential map $Exp_{\c}$ of the holomorphic metric $g_{\c}$ when restricted to the imaginary points $i TG \subset T^{1,0}G_{\c}$. There are two issues here: is the map defined on all of $i TG$ (this will be treated in sections \ref{sec:complexintegrals}, \ref{sec:U2cases}) and in that case, is the map a diffeomorphism? The differential is computed in terms of Jacobi fields and their complexifications.
\vs
In section \ref{sec:complexintegrals} we recall classical facts about the motion of a free rigid body represented by a compact Lie group $G$ and its equivalence to the geodesic flow for a left-invariant metric on $G$, in particular, we recall the complete integrability of such a system for $G = SU(2)$. For a generic left-invariant metric one can show, using the mechanical integrals of motion, that the map $Exp_{\c}$ is not well-defined on all of $i TG$ because most complexified (real) geodesics are not given by entire functions into $G_{\c}$. The remaining cases all have an extra symmetry, i.e., they are homogeneous under $U(2) = SU(2) \times S^1/ <\pm 1 >$.
\vs
In section \ref{sec:U2cases} we treat the cases on $SU(2)$ with extra symmetry. These are sometimes referred to as Berger spheres. These metrics are parametrized by a parameter $\lambda$ in the positive real line, where $\lambda = 1$ is the case of the round (unit) sphere. We first show that all $g_{\lambda},$ with $\lambda \in (0,1]$ have entire tubes. This is done by exhibiting such metrics as Riemannian quotients and invoking a result of Sz\H{o}ke \cite{s2}, or alternatively, of Aguilar \cite{a1}. Then we show that the metrics $g_{\lambda},$ with $\lambda > 1$, have finite radius tubes by solving the Jacobi equations to calculate the behavior of the differential of the map $Exp_{\c}$. For such metrics, the map $Exp_{\c}$ is defined and smooth on $i TG$, but singularities of its differential occur. We solve the Jacobi equations for certain geodesics on such manifolds and prove the existence of focal points in finite time, giving an upper bound to the radius of existence of the Grauert tube, a bound which tends to $+\infty$ as $\lambda \searrow 1$.
\vs
In section \ref{sec:rmks} we just include a few remarks and open questions remaining after the present work. 
\vs
For convenience and completeness, we have collected some elementary and explicit differential geometric details on left-invariant metrics on $SU(2)$ in an appendix which appears as section 7 of the arXiv posting of this paper, \cite{web}.
\vs
This work was supported by the US National Science Foundation [DMS-1105586, RTG grant 1045119].

\vs\vs
%
%
%
%
%
%
%
%
%
%
%
%
\section{Grauert Tubes and Some Simple Lemmas}\label{sec:GTlemmas}
\vs\vs
We review briefly some basic properties of the construction of adapted complex structures on neighborhoods of the zero section in the tangent bundle $TM$ \cite{ls,s} or equivalently, cotangent bundle $T^*M$ \cite{gs1, gs2} of a compact, real analytic Riemannian manifold $(M, g)$. For specificity we will follow the covariant (tangent bundle) formulation of \cite{ls,s}.
\vs
Let $\pi: TM \to M$ denote the projection, and let $\Theta$ denote the canonical one-form on $TM$ given by 
\[
	\Theta(V) := (D\pi(V), v),
\]
where $v \in T_{\pi(v)}M, V \in T_v(TM)$, and $(\cdot,\cdot)$ denotes the Riemannian metric $g$. An {\em adapted complex structure} on $T^RM := \{v \in TM \, | \, |v| < R\}$ is a complex structure for which the leaves of the Riemann foliation $\mathcal{F}$ of $TM \setminus 0_M$ are complex submanifolds. Recall that the Riemann foliation is given by the tangent sub-bundles to all geodesics for the metric $g$ as leaves. Thus, the adapted complex structure on $T\r$ identifies it with $\c$ via 
\[
	T\r \ni (s, t \frac{d\,\,}{ds}) \to s+ i t \in \c,
\]
and is functorial for geodesic immersions. 
\vs
For the maximal radius $R$ for which the adapted complex structure is defined, the complex manifold $T^RM$ is called the {\em Grauert tube} of $(M, g)$. If $R = +\infty$, we say the tube is {\em entire}.  
\vs
Here are some properties of tubes which will be needed in what follows:
\vs
\begin{enumerate}
\item If $\phi$ is an isometry of $(M, g)$, then its action by its differential $D\phi$ on the Grauert tube $T^RM$ is holomorphic for the adapted complex structure, while the map $v \to -v$ is anti-holomorphic.
\vs
\item The function $r^2 := |v|^2$ given by the $g$-length squared of vectors is strictly plurisubharmonic on the tube.
\vs
\item We have that  \begin{equation} \label{eqn:Liouville}
	\frac{i}2 \partial \bar{\partial} r^2 = d\Theta := \Omega
\end{equation}
is the Liouville form on $TM$ and is a K\"ahler form. The corresponding K\"ahler metric induces the original metric $g$ along the zero section $M$. 
\vs
%
%
%
%
%
%
%
%
\vs
\item $r$ satisfies the homogeneous complex Monge-Amp\`ere equation $\partial \bar{\partial} u^n = 0$ on $T^RM \setminus M$.
\end{enumerate}
\vs
Let $G$ be the isometry group of $(M, g)$, and $\mathfrak{g}$ its Lie algebra. An element $\xi \in \mathfrak{g}$ will be identified with its induced vectorfield on $M$. For any vectorfield $\xi$ we will denote by $\Phi^{\xi}_t$ the corresponding 1-parameter group of diffeomorphisms. This group lifts to an action by differentials on $TM$, and the induced vectorfield on $TM$ will be denoted $\tilde{\xi}$. In particular, $D\pi\tilde{\xi} = \xi$. If $(q^1, \ldots, q^n)$ are local coordinates on $M$, and $\xi = \sum_k a^k(q) \frac{\partial \,\,}{\partial q^k},$ then in the corresponding coordinates $(q^1, \ldots, q^n, v^1, \ldots, v^n)$ the field $\tilde{\xi}$ can be expressed as follows:
\[
	\tilde{\xi} = \sum_k a^k(q) \frac{\partial \,\,}{\partial q^k} - \left(\sum_ j v^i \frac{\partial a^k(q)}{\partial q^i}\right) \frac{\partial \,\,}{\partial v^k}.
\]
It will be important to note that 
\begin{equation} \label{eqn:scale}
	(N_{\rho})_* \, \tilde{\xi} = \tilde{\xi},
\end{equation}
for all $\rho \neq 0.$
\vs
It follows from the property (1) above for Grauert tubes that $\Phi^{\tilde{\xi}}_t$ acts biholomorphically on the tube, and thus
\[
	\tilde{\xi}^{1,0} = \frac12 (\tilde{\xi} - i J \tilde{\xi})
\]
is a holomorphic vectorfield on the tube, where $J$ is the almost complex structure on the tube. 
\vs
Finally, let $G_{\c}$ denote the complexification of the group $G$, with Lie algebra $\mathfrak{g}_{\c} = \mathfrak{g} \otimes \c$. We can now state the first lemma, which is due to Sz\H{o}ke (\cite{s3}, Theorem C).
\vs
\begin{lemma} \label{lemma:action}
Let $(M, g)$ be a compact Riemannian manifold with entire Grauert tube. Then the complex group $G_{\c}$ acts holomorphically on all of the tube, i.e., on all of $TM$ with the adapted complex structure.
\end{lemma}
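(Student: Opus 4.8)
The plan is to promote the infinitesimal action of $\mfg$ on the entire tube to a genuine action of the complex group $G_{\c}$ via the exponential map and monodromy/analytic-continuation arguments. First I would recall, as noted in the text, that for each $\xi \in \mfg$ the lifted field $\tilde{\xi}$ on $TM$ is real holomorphic for the adapted complex structure (property (1) above applied to the one-parameter isometry group $\Phi^\xi_t$), so that $\tilde{\xi}^{1,0} = \tfrac12(\tilde{\xi} - iJ\tilde{\xi})$ is a globally defined holomorphic vector field on all of $TM$ when the tube is entire. Taking $i\tilde\xi := \re(i\,\tilde{\xi}^{1,0}) = -\tfrac12 J\tilde{\xi}$ gives, for each $\xi$, a second real vector field on the tube; I would check that $\xi \mapsto \tilde\xi, i\xi \mapsto i\tilde\xi$ is a Lie algebra homomorphism from $\mfg_{\c} = \mfg \otimes \c$ into the real vector fields on $TM$, using that $\xi\mapsto\tilde\xi$ is already a homomorphism $\mfg \to \mathrm{Vect}(TM)$ and that $J$ is integrable, so that brackets of the $(1,0)$-fields close and their real/imaginary parts obey the complexified bracket relations. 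Thus we obtain an infinitesimal action of $\mfg_{\c}$ by holomorphic vector fields on the entire tube.

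The core analytic issue is completeness: to integrate this $\mfg_{\c}$-action to a $G_{\c}$-action one must show the holomorphic flows of the $\tilde{\xi}^{1,0}$ are complete, i.e., defined for all complex time. The real fields $\tilde\xi$ are complete on $TM$ because $G$ acts on the compact $M$ and the lift to $TM$ is the differential of that action (here property (\ref{eqn:scale}), invariance under the fibrewise scalings $N_\rho$, is the key point: the flow preserves each sphere bundle, and completeness on a neighborhood of the zero section plus this scaling symmetry upgrades to completeness on all of $TM$). For completeness in the imaginary time directions one invokes the structure of the entire tube: the function $r^2$ is a global exhaustion (strictly plurisubharmonic, with $r^2$ proper on $TM$ since fibres are complete), and the holomorphic flow of $\tilde{\xi}^{1,0}$ moves along leaves of the geometry in a way controlled by $r^2$ — concretely, $N_\rho$-invariance of $\tilde\xi$ forces $\tilde{\xi}^{1,0}$ to have at most linear growth with respect to the exhaustion, ruling out escape in finite (complex) time. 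Hence every $\tilde{\xi}^{1,0}$ is complete, and the standard Lie-theoretic integration (Palais' theorem on integrating Lie algebra actions by complete vector fields, applied to the simply connected complexification, then descending) produces a holomorphic action of $G_{\c}$ on $TM$ extending the isometric $G$-action and the biholomorphic one-parameter groups $\Phi^{\tilde\xi}_t$.

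The main obstacle I anticipate is precisely this completeness-in-imaginary-time step: holomorphic vector fields on noncompact complex manifolds are generically incomplete (indeed the paper's whole point, in sections \ref{sec:complexintegrals} and \ref{sec:U2cases}, is that complexified geodesic flows fail to be complete for non-special metrics), so the argument must use something specific to the entire tube hypothesis — the $N_\rho$-equivariance (\ref{eqn:scale}) together with properness of $r^2$ — rather than any soft general principle. I would organize the write-up so that the homomorphism-into-vector-fields part is dispatched quickly, and the completeness estimate, leaning on the scaling symmetry and the exhaustion, carries the weight; I would also remark that once we know $G_{\c}$ acts, transitivity of $G$ on $M$ with isotropy $H$ immediately yields the tube $\cong G_{\c}/H_{\c}$, which is the form in which the lemma is used in the sequel. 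Since the statement is attributed to Sz\H{o}ke (\cite{s3}, Theorem C), it is also legitimate to present this as a sketch and cite \cite{s3} for the full details of the completeness argument.
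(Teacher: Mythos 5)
Your overall strategy coincides with the one the paper's own (suppressed) argument follows --- the compiled text simply cites \cite{s3}, Theorem C --- namely: pass to the holomorphic fields $\tilde{\xi}^{1,0}$, reduce integrating the $\mfg_{\c}$-action to completeness of the imaginary parts $-J\tilde{\xi}$, and get completeness from properness of $u$ together with the $N_{\rho}$-invariance (\ref{eqn:scale}). The gap is that the one step carrying all the weight, your claim that $N_{\rho}$-invariance ``forces $\tilde{\xi}^{1,0}$ to have at most linear growth with respect to the exhaustion,'' is asserted rather than proved, and it is not a formal consequence of scaling invariance alone. The quantity to be controlled is $-J\tilde{\xi}(u) = -\langle J\tilde{\xi}, du\rangle$, and $J$ is the adapted complex structure, which is not explicit in the bundle coordinates in which (\ref{eqn:scale}) is transparent; a priori $J$ could rotate a scaling-invariant field into the $du$-direction with a coefficient growing too fast for your ODE comparison.

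The paper closes this gap with a specific mechanism you do not supply. At $v \in TM \setminus M$ one splits $T_vTM = T_v\mathcal{F} \oplus H_v$, where $\mathcal{F}$ is the Riemann foliation and $H_v = \ker du \cap \ker \Theta$; both summands are $J$-invariant (\cite{ls}, (5.13)--(5.14)) and preserved by $(N_{\rho})_*$ because $du$ and $\Theta$ are homogeneous of degree one under $N_{\rho}$. Since $du$ annihilates $H_v$, only the foliation component of $\tilde{\xi}$ contributes to $-J\tilde{\xi}(u)$, and the computation reduces to a single leaf, i.e.\ to $\dim_{\r} M = 1$, where the adapted structure is the standard one on $\c$, $u = |y|$, and a scaling-invariant field has the form $a(x)\frac{\partial}{\partial x} + b(x)\,y\,\frac{\partial}{\partial y}$; one finds $-J\tilde{\xi}(u) = a(x)$, independent of $y$, hence bounded on the compact unit sphere bundle and so on all of $\{u \geq 1\}$. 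This uniform bound $|(-J\tilde{\xi})(u)| \leq C$ is what keeps the flow inside compact sublevel sets for all finite real time and lets the germ of the $G_{\c}$-action integrate. Your fallback of citing \cite{s3} is legitimate (it is what the published text does), but as a self-contained proof your write-up needs the foliation/CR splitting and the leaf reduction to substantiate the growth estimate. Two minor points: the real fields $\tilde{\xi}$ are complete simply because they generate the lift of the compact group action, no scaling argument needed; and what the reduction actually yields is boundedness of $-J\tilde{\xi}(u)$, not merely linear growth.
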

\vs

This, in turn, is the main ingredient in the proof of the next lemma.

\vs
\begin{lemma} \label{lem:homogeneity}
	Let $M = G,$ a compact, connected Lie group, and $g$ be a left invariant metric on $M$. Assume that the Grauert tube of $M$ is entire. Then the 	connected complex group $G_{\c}$ acts simply transitively on $M_{\c} := TM$, i.e., the entire tube $TM$ is biholomorphic to $G_{\c}$.
	\end{lemma}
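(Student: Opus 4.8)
The plan is to bootstrap from Lemma~\ref{lemma:action}, which already gives a holomorphic action of $G_\c$ on the entire tube $TM$. The goal is to upgrade this to: the action is \emph{simply transitive}. First I would observe that since $g$ is left-invariant, the group $G$ acting on itself by left translations is a subgroup of the isometry group, and this action is already simply transitive on the zero section $M = G \subset TM$. The differentials of left translations therefore act on the tube, and since $G_\c$ is the complexification of (this copy of) $G$, Lemma~\ref{lemma:action} applies. So we have a holomorphic $G_\c$-action on $TM$ extending the left-translation action of $G$ on the zero section.

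The key step is to analyze the orbit of a single point. Fix the identity $e \in G \subset TM$ as basepoint. The isotropy subgroup of $e$ for the $G$-action is trivial (left translations act freely), so the complexified isotropy $H_\c$ at $e$ is trivial as well; by the discussion in Section~\ref{sec:intro} (following from Szőke's result \cite{s3}), the entire tube is then biholomorphic to $G_\c / H_\c = G_\c$, and the orbit map $G_\c \to TM$, $h \mapsto h \cdot e$, is a biholomorphism. To make this rigorous I would: (i) show the orbit map is an immersion at $e$ — its differential sends $\mathfrak{g} \subset \mathfrak{g}_\c$ isomorphically onto $T_e M = T_e(0_M)$, and sends $i\mathfrak{g}$ onto $J(T_e M)$, which is the vertical (fiber) tangent space $T_e(T_e G)$; together these span $T_e(TM)$ since $\dim_\c G_\c = \dim_\r G = n$ and $\dim_\r TM = 2n$; (ii) conclude the orbit map is an open holomorphic map, hence the orbit of $e$ is open in $TM$; (iii) since $TM$ is connected and the $G$-orbit of the zero section is all of $M$ while distinct $G$-orbits (for the $G_\c$ action) are either equal or disjoint, deduce there is a single $G_\c$-orbit, i.e., the action is transitive.

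For simple transitivity it remains to check the stabilizer of $e$ in $G_\c$ is trivial. Since the orbit map is an open holomorphic map between connected complex manifolds of the same dimension, its fibers are discrete; the stabilizer $S \subset G_\c$ is thus a discrete (hence closed, $0$-dimensional) complex subgroup. It contains the trivial stabilizer of the $G$-action, and one argues $S$ is trivial: $S \cap G = \{e\}$, and a discrete normalish complication is ruled out because $G_\c$ is connected and the orbit map, being open with discrete fibers, is a covering onto its image; since $TM$ is simply connected when $G$ is (as $TM$ deformation retracts to $G$, and e.g. for $G = SU(2)$, $G$ is simply connected), the covering is trivial, so $S = \{e\}$. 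Hence $G_\c \to TM$ is a biholomorphism.

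The main obstacle I anticipate is step (i)–(ii): verifying that the infinitesimal generators $-J\tilde\xi$ of the ``imaginary'' directions, at the zero section, genuinely span the vertical directions and are not tangent to $M$ — equivalently, that the $G_\c$-action is not ``degenerate'' along $0_M$. This is exactly where left-invariance of $g$ is used: for a bi-invariant metric this is classical, but for a merely left-invariant metric one must check the adapted complex structure $J$ still pairs the left-invariant horizontal field $\xi$ with a genuinely transverse vertical field at the zero section, which follows from property (3) in Section~\ref{sec:GTlemmas} (the Kähler form restricts to $g$ along $M$, so $J$ is non-degenerate transverse to $0_M$). Once transitivity and discreteness of stabilizers are in hand, the simple-transitivity and the identification with $G_\c$ are formal.
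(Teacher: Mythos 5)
Your step (iii) contains the essential gap. You correctly establish that the orbit map is an immersion at $e$ (since $M$ is totally real in the tube, the real span of the $\tilde{\xi}_i$ together with its image under $J$ fills out $T_e(TM)$), hence that the $G_{\c}$-orbit of $e$ is \emph{open}. But a single open orbit in a connected space does not force transitivity: the complement of that orbit is closed and $G_{\c}$-invariant, and could a priori be a nonempty proper analytic subset on which $G_{\c}$ acts with lower-dimensional orbits (compare $\c^{*}$ acting on $\c$ by multiplication: the orbit of $1$ is open, yet $0$ is a fixed point). To run the ``all orbits open plus connectedness'' argument one needs local transitivity at \emph{every} point of $TM$, not just along the zero section. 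That is exactly the hard part of the paper's proof: it introduces the degeneracy locus $\mcp = \{\tilde{\xi}^{1,0}_1\wedge\cdots\wedge\tilde{\xi}^{1,0}_n = 0\}$ and shows $\mcp=\emptyset$ by taking the minimum of the proper plurisubharmonic exhaustion $u$ restricted to $\mcp$, observing that the compact $G$-orbit $S_v$ through that minimum point lies in $\mcp$ (which has complex dimension $<n$) and therefore cannot be totally real, and then deriving a contradiction with the strict pseudoconvexity of the level sets of $u$ via the Levi form of $S_v$ and a Bishop-disk/maximum-principle argument. None of this appears in your proposal, and your appeal to the discussion in section \ref{sec:intro} (the identification of the tube with $G_{\c}/H_{\c}$) is circular, since that identification is precisely the content of the lemma being proved.

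Two smaller points. Once transitivity is granted, your covering-space argument for simple transitivity relies on $TM$ being simply connected, which fails for general compact connected $G$ (e.g.\ $SO(3)$ or a torus); the paper instead uses Sard's theorem together with $G_{\c}$-equivariance to see that the orbit map $A_v$ is a covering, and then the homotopy equivalences $G\subset G_{\c}$ and $M\subset M_{\c}$, together with the fact that $A_v|_G:G\to M$ is a diffeomorphism, to conclude the covering has degree $1$. Finally, the concern you raise at the end about whether $J$ pairs $\tilde\xi$ with a genuinely transverse vertical direction at the zero section is settled simply by the total reality of $M$ in its Grauert tube; it does not require the K\"ahler-form property.
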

\vs
\begin{proof}
By lemma \ref{lemma:action}, we know that $G_{\c}$ acts on $M_{\c}$. We will prove that it acts locally transitively at every point $v \in M_{\c}$. Assuming this for the moment, this proves that all orbits of $G_{\c}$ on $M_{\c}$ are open, and since $M_{\c}$ is connected, this proves that the mapping $A_v: G_{\c} \ni h \to h\cdot v \in M_{\c}$, for any fixed $v \in M_{\c}$, is surjective. Since $G_{\c}, M_{\c}$ are of the same complex dimension, and the map $A_v$ is $G_{\c}$ equivariant, Sard's theorem says that every point in $M_{\c}$ is a regular value of $A_v$, and hence, $A_v$ is a topological covering. If $v \in M$, the covering $A_v: G \to M$ is a diffeomorphism. Since the inclusions $G \subset G_{\c}, M \subset M_{\c}$ are homotopy equivalences, the covering $A_v$ has degree 1 over all of $M_{\c}$, and therefore $A_v$ is a global biholomorphism.
\vs
\vs
To show that $G_{\c}$ acts locally transitively, it suffices to show that 
\[
	DA_{v}: T^{1,0}_{I} G_{\c} = \mathfrak{g}_{\c} \to T_v^{1,0} M_{\c}
\]
is an isomorphism. This, in turn is equivalent to showing that, for a basis $\xi_1, \xi_2,\ldots, \xi_n$ of $\mathfrak{g}$ over $\r$, the complex fields $\tilde{\xi}^{1,0}_1, \tilde{\xi}^{1,0}_2, \ldots, \tilde{\xi}^{1,0}_n$ are a basis for $T^{1,0}M_{\c}$ over $\c$ at $v$. Define $\mcp = \{ \tilde{\xi}^{1,0}_1 \wedge \tilde{\xi}^{1,0}_2 \wedge \ldots \wedge \tilde{\xi}^{1,0}_n = 0\}$. We wish to show $\mcp$ is empty. 
\vs
Assume it is not empty. Then $\mcp$ is a $G_{\c}$-invariant divisor on $M_{\c}$, and $\mcp \cap M = \emptyset$. Consider the plurisubharmonic function $u$ restricted to $\mcp$. Since $u$ is non-negative and proper when restricted to $\mcp$, it achieves a global minimum at some $v \in \mcp$. The map $A_v$ restricted to $G$ gives an embedding of $G$ into $\mcp \subset M_{\c}$ where it is a $CR$-submanifold of real dimension $n$. Note that the function $u$ is constant on $S_v := A_v(G)$. Since $\dim_{\c} \mcp < n$, $S_v$ cannot be a totally real submanifold in $\mcp$. Hence its complex tangent space is positive dimensional at every point. Its Levi-form cannot be trivial, since otherwise the leaves of the Levi foliation of $S_v$ would give positive-dimensional complex submanifolds in the level set $\{u = u(v)\}$, which is strictly pseudoconvex, by the tube construction. By a classical construction in complex analysis (local Bishop disks), there exists a continuous mapping $F: \bar{\triangle} \to U \subset M_{\c}$, where $U$ is a small polydisk neighborhood of $v$, and with $F|_{\triangle}$ holomorphic and non-constant, and $F(\partial \triangle) \subset S_v$. Thus $u\circ F|_{\partial\triangle} \equiv u(v)$. Therefore, $F(\triangle) \subset \mcp \cap U$, since $\mcp \cap U$ is closed and analytic in $U$, which gives us that $u\circ F(z) \geq u(v),$ for all $z \in \triangle$. By the maximum principle, $u\circ F$ is constant on $\triangle$, contradicting, again, the strict pseudoconvexity of $S_v$. 
\end{proof}
\vs\vs
%
%
%
%
%
%
%
%
%
%
%
%
%
%
\section{The holomorphic geometry of $G_{\c}$} \label{sec:Gcgeom}
\vs\vs
In this section, we relate the Grauert tube over a left-invariant metric on $G$ and the holomorphic geometry of its extension to all of $G_{\c}$. In this way, we will be able to study questions about the possible radius of such tubes by corresponding properties of the holomorphic Riemannian geometry of $G_{\c}$, which is always a $G_{\c}$ homogeneous structure on all of $G_{\c}$.
\vs\vs
Let $T^{1,0}G_{\c}$ be the holomorphic tangent bundle of $G_{\c}$ which is biholomorphic to $G_{\c} \times \mathfrak{g}_{\c},$ an explicit isomorphism given by left-invariant vectorfields on $G_{\c}$. Let $\sigma: G_{\c} \to G_{\c}$ be the complex conjugation such that Fix($\sigma) = G$. Its differential is $D\sigma: T^{1,0}G_{\c} \to T^{0,1}G_{\c}$. This induces a conjugation $\tilde{\sigma}$ on $T^{1,0}G_{\c}$ given by
\begin{equation} \label{eqn:conjdef}
	T^{1,0}G_{\c} \ni \zeta \to \tilde{\sigma}(\zeta) := \overline{D\sigma(\zeta)} \in T^{1,0}G_{\c}.
\end{equation}
Then we will identify Fix($\tilde{\sigma}$) = $TG$, and for a section $\zeta$ of $T^{1,0}G_{\c}$ over $G$, we have $\tilde{\sigma}(\zeta) = \zeta$ if and only if $\zeta$ is in the \emph{real} span of $\tilde{\xi}^{1,0}$, with $\xi \in \mfg$ as above, over each $g \in G$. Equivalently, $\zeta(g) \in \{g\} \times \mathfrak{g} \subset G_{\c} \times \mathfrak{g}_{\c}.$ To fix ideas, if $G = \r$ (at this point we do not need to assume our group $G$ is compact) with the Euclidean metric and unit tangent vector $\frac{d}{dx}$, so that $\mfg \cong \r \frac{d}{dx} $, then $G_{\c}\cong \c$ with coordinate $z = x+iy$, and $\widetilde{(\frac{d}{dx})}^{1,0} = \frac{\partial}{\partial z}$ on $\c$. In particular, $\frac{\partial}{\partial z}$ is real over $\r = G \subset G_{\c} \cong \c$, and a section $f(z) \frac{\partial}{\partial z}$ of $T^{1,0}G_{\c}$ is $\tilde{\sigma}$-real if and only if $\overline{f(\bar{z})} = f(z)$. 
\vs
A left invariant metric on $M = G$ is given by a symmetric, positive definite transformation $A: \mathfrak{g} \to \mathfrak{g}$, where we measure symmetry with respect to the inner product $(\xi, \eta) = - B(\xi, \eta)$, $B$ being the Killing form on $\mathfrak{g}$. If $\mfg$ is simple, the form $-B$ determines the unique (up to constant positive scale) bi-invariant metric on $G$, which is just the round metric of sectional curvature $\frac18$ when $G \cong SU(2) \cong S^3$. We extend $A$ to $\mathfrak{g}_{\c}$ by extending scalars, and similarly for $B$ which gives $B_{\c}$, the Killing form for $\mathfrak{g}_{\c}$. This gives a non-degenerate, holomorphic bilinear form on $T^{1,0}G_{\c}$. This form, when restricted to the real sub-bundle $TG \subset T^{1,0}G_{\c}$, induces the original metric on $G$. Otherwise put, every left-invariant metric on $G$ has a unique holomorphic extension to a holomorphic metric on $G_{\c}$. We will denote the left-invariant metric associated to $A$, when we have to be precise, by $g_A$, and the complexified holomorphic version of this metric by $g_{A, \c}$. Using $g_{A, \c}$ we get an induced symplectic structure on $T^{1,0}G_{\c}$, with form $\Omega_{\c}$. This form restricts to $\Omega$ on $TG$. We have the induced geodesic flow on $T^{1,0}G_{\c}$, though it is not necessarily complete in the complex case. We have a kind of normal exponential map, a smooth mapping $\Phi: TG \to G_{\c}$ given by 
\begin{equation}  \label{eqn:Expdef}
	\Phi: TG \ni v \to \gamma_{\c}(i v) = Exp_{\c}(i v) \in G_{\c},
\end{equation} 
where, for $v \neq 0$, $\gamma$ is the real geodesic with $\gamma(0) = \pi(v), \dot{\gamma}(0) = \frac{v}{|v|}$. For $v = 0 \in T_gG, \Phi(v) = g$. This map is, at first, just defined in a neighborhood of the zero section $G$. It is real analytic, $G$-equivariant and sends a real geodesic $\gamma \subset G$ to the real points of the corresponding $g_{\c}$-geodesic. Now if we consider $TG$ with its adapted complex structure, which we will momentarily call $J_T$, and we consider the complex submanifold $T\gamma \subset TM, J_T$, for $\gamma$ a $g$-geodesic, then $\Phi$ restricted to $i T\gamma$ maps it biholomorphically onto a neighborhood of $\gamma \subset \gamma_{\c} \subset G_{\c}$. Since this is true for all geodesics in $G, \Phi$ is holomorphic from $TM, J_T$ to $G_{\c}$. But then $\Phi$ is locally $G_{\c}$-equivariant, and is, locally near the zero section, the inverse of the action map from $G_{\c} \to TM, J_T,$ which is a global biholomorphism, by lemma \ref{lem:homogeneity}. Hence, $\Phi$ is globally defined by analytic extension, and is holomorphic. It embeds every $T\gamma \subset TM, J_T$ onto $\gamma_{\c} \subset G_{\c}$ biholomorphically. We summarize this discussion in the following lemma. We call a connected, $\sigma$-invariant complex geodesic curve $\Gamma \subset G_{\c}$ a {\em real} $g_{\c}$-geodesic if $\Gamma$ is the complexification of a real geodesic $\gamma \subset G$.
\begin{lemma}  \label{lem:folGc}
Let $g = g_A$ be a left invariant metric on $G$ such that its Grauert tube is entire. Then all real $g_{\c}$-geodesics on $G_{\c}$ for the holomorphic metric $g_{\c} = g_{A, \c}$ are complete. Furthermore, they give a foliation $\mathcal{F}_{\c}$ of $G_{\c} \setminus G$.
\end{lemma}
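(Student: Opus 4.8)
The plan is to leverage the biholomorphism $A_v\colon G_{\c}\to TM$ (with its adapted complex structure $J_T$) established in Lemma \ref{lem:homogeneity}, together with the holomorphic normal exponential map $\Phi\colon TG\to G_{\c}$ constructed just above. First I would record the completeness assertion: for a real $g$-geodesic $\gamma\subset G$, the submanifold $T\gamma\subset TM$ is a $J_T$-complex submanifold biholomorphic to $\c$ via the adapted structure (this is the functoriality for geodesic immersions, recalled in Section \ref{sec:GTlemmas}), and $\Phi$ restricted to $iT\gamma$ is a biholomorphism onto the real points of the $g_{\c}$-geodesic $\gamma_{\c}$. Since $\Phi$ is globally defined (entire tube hypothesis) and embeds $T\gamma$ onto $\gamma_{\c}\subset G_{\c}$, the curve $\gamma_{\c}$ is the biholomorphic image of $\c$, hence the complex geodesic is parametrized by an entire curve, i.e. complete. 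I would state this cleanly: the holomorphic geodesic through $\pi(v)$ in the complexified direction corresponding to $v$ extends to all complex time precisely because $\Phi|_{T\gamma}$ is a global biholomorphism onto $\gamma_{\c}$.

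For the foliation statement, the idea is to transport the Riemann foliation $\mathcal{F}$ of $TM\setminus 0_M$ through the biholomorphism. The leaves of $\mathcal{F}$ are the punctured tangent lines $T\gamma\setminus 0$ to geodesics, and these are $J_T$-complex submanifolds by the very definition of the adapted complex structure; near the zero section they are disjoint and cover $TM\setminus 0_M$. Under $\Phi$ (equivalently, under $A_v^{-1}$ composed appropriately), each leaf $T\gamma\setminus 0$ maps to $\gamma_{\c}\setminus\gamma$ — I need to check that the zero section $0_M\subset TM$ corresponds to $G\subset G_{\c}$, which follows since $\Phi(0_g)=g$ and $\Phi$ is $G$-equivariant and real analytic. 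So the images $\{\gamma_{\c}\setminus\gamma\}$ are complex curves in $G_{\c}\setminus G$, they are pairwise disjoint because the leaves of $\mathcal{F}$ are, and they cover $G_{\c}\setminus G$ because $\Phi$ is a diffeomorphism (indeed biholomorphism) from $TM$ onto $G_{\c}$ carrying $0_M$ onto $G$. Finally, smoothness/regularity of the foliation — that locally it looks like a product of a disk with a ball — is inherited from the smoothness of $\mathcal{F}$ on $TM\setminus 0_M$ (a genuine foliation there, as part of the tube package) pushed forward by a biholomorphism.

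The main obstacle, and the step deserving the most care, is the identification of the zero section $0_M\subset TM$ with the real group $G\subset G_{\c}$ under the biholomorphism, and the compatibility of the three maps in play: the action map $A_v$, the normal exponential $\Phi$, and the conjugation $\sigma$ on $G_{\c}$ versus the map $v\mapsto -v$ on $TM$. One must confirm that $\Phi$ intertwines the antiholomorphic involution $v\mapsto -v$ of the tube with $\sigma$, so that the fixed-point set $G=\mathrm{Fix}(\sigma)$ is exactly $\Phi(0_M)$, and hence that each leaf image is a $\sigma$-invariant complex curve meeting $G$ in a real geodesic — i.e. genuinely a \emph{real} $g_{\c}$-geodesic in the sense defined before the lemma. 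Once this bookkeeping is in place, the completeness and the foliation claim both follow formally by transport of structure; no Jacobi-field or curvature computation is needed here, those being reserved for the later sections where the tube is \emph{not} assumed entire.
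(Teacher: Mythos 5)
Your argument is essentially the paper's own: the lemma is stated there as a summary of the preceding discussion, which likewise obtains $\Phi$ as the (globally defined, holomorphic) inverse of the action-map biholomorphism from Lemma \ref{lem:homogeneity}, deduces completeness from the fact that $\Phi$ embeds each $T\gamma$ biholomorphically onto $\gamma_{\c}$, and obtains the foliation of $G_{\c}\setminus G$ by transporting the Riemann foliation of $TM\setminus 0_M$ through this biholomorphism. Your extra bookkeeping about $\sigma$ versus $v\mapsto -v$ is a reasonable refinement the paper leaves implicit; the only slight looseness is calling $T\gamma$ ``biholomorphic to $\c$'' (for closed geodesics it is a quotient cylinder), but this does not affect the completeness conclusion.
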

\qed
\vs
Next, if we assume $\Phi$ is defined on $TG$ on a neighborhood $|v|^2_g < R$, we calculate the differential of $\Phi$. First, there are the directions in $T_v(TG)$ spanned by the vectors $\tilde{\eta}_x, x \in \mfg$. Since $\Phi$ is $G$-equivariant, 
\vs
\[
	D\Phi_{v}(\tilde{\eta}_x)(v) = \eta^{1,0}_{x,\c}(\Phi(v)).
\]
\vs
Thus $D\Phi$ is always injective on this subspace. A complementary subspace in $T_v(TG)$ is given the vertical directions $VT_v \subset T_v(TG)$, tangent to the fiber through $v$. This is identified naturally with $T_{\pi(v)}G$, where $\pi(v)$ is the projection of $v$ in $G$. For a fixed $w \in T_{\pi(v)}G,$ we have a vector, still denoted $w$, in every $VT_{v'}(TG),$ where $\pi(v) =\pi(v').$ For the real exponential map $Exp: TG \to G$, we have that
\begin{equation}\label{eqn:DExp}
	DExp_{t v}(w) = Y_w(\gamma(t)),
\end{equation}
where $\gamma(t)$ is the (real) geodesic satisfying $\gamma(0) = \pi(v), \dot{\gamma}(0) = v,$ and $Y_w$ is the Jacobi field along $\gamma$ with initial conditions $Y_w(\pi(v)) = 0, \nabla_{\dot{\gamma}}Y_w(\pi(v)) = w.$ Letting $\zeta = t + i s \in \c$, we may extend $\gamma, Y_w$ holomorphically to $\gamma_{\c}(\zeta),$ $Y_{w, *}(\gamma(\zeta)).$ Similarly, in the holomorphic geometry of $G_{\c}$,we can consider the holomorphic geodesic $\Gamma(\zeta)$ satisfying 
\vs
\begin{equation}\label{eqn:Gamma}
	\nabla^{\c}_{\dot{\Gamma}(\zeta)} \dot{\Gamma}(\zeta) = 0, \; \Gamma(0) =\pi(v), \; \dot{\Gamma}(0) = i v \in T^{1,0}_{\pi(v)}(G_{\c}).
\end{equation}
\vs
and the holomorphic Jacobi field $W^{\c}_w = W^{\c}_w(\Gamma(\zeta)$ along $\Gamma$ satisfying $W_w^{\c}(\pi(v))$ $= 0, \nabla^{\c}_{\dot{\Gamma}} W_w^{\c}(\pi(v)) = w.$ Then we have the holomorphic case of (\ref{eqn:DExp}):
\vs\vs
\begin{equation}\label{eqn:DExpC}
	DExp_{\c, i t v }(w) = W^{\c}_w(\Gamma(t)) \in T^{1,0}_{\Gamma(t)}(G_{\c}).
\end{equation}
\vs\vs
\begin{lemma}\label{lem:ICs} The following identities hold:
\[
	\begin{array}{l} \text{(1)}\; \Gamma(\zeta) = \gamma_{\c}(i \zeta) \\
	\\
	\text{(2)} \; W^{\c}_w(\Gamma(\zeta)) = Y^{\c}(\gamma_{\c}(i \zeta))\\
	\\
	\text{(3)} \; D\Phi_{t v}(w) = Y^{\c}(\gamma_{\c}(i t v)).
	\end{array}
\]
\end{lemma}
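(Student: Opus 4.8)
The plan is to derive all three identities from a single principle: a real-analytic tensorial identity valid along a real geodesic $\gamma\subset G$ continues, by analytic continuation, to a holomorphic identity along its complexification $\gamma_{\c}\subset G_{\c}$, and the affine substitution $\zeta\mapsto i\zeta$ carries holomorphic geodesics to holomorphic geodesics and holomorphic Jacobi fields to holomorphic Jacobi fields. For (1), I would first note that the Christoffel symbols of $g_{\c}=g_{A,\c}$ are holomorphic, so both sides of $\nabla^{\c}_{\dot{\gamma}_{\c}}\dot{\gamma}_{\c}=0$ are holomorphic vector fields along $\gamma_{\c}$ that agree on the real axis (there the identity is just the geodesic equation of $\gamma$ in $(G,g)$); by the identity theorem the equation holds throughout the domain of $\gamma_{\c}$. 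Setting $\beta(\zeta):=\gamma_{\c}(i\zeta)$ and using that $h(\zeta)=i\zeta$ is affine (so $h''\equiv 0$), the standard reparametrization identity for the geodesic equation gives $\nabla^{\c}_{\dot{\beta}}\dot{\beta}=(h')^{2}\,(\nabla^{\c}_{\dot{\gamma}_{\c}}\dot{\gamma}_{\c})\circ h=0$, while $\beta(0)=\gamma_{\c}(0)=\pi(v)$ and $\dot{\beta}(0)=i\dot{\gamma}_{\c}(0)=iv$. These are exactly the defining conditions (\ref{eqn:Gamma}) of $\Gamma$, so uniqueness of solutions of the holomorphic geodesic ODE forces $\Gamma(\zeta)=\gamma_{\c}(i\zeta)$.

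For (2), the real Jacobi field $Y_w$ along $\gamma$ solves a linear ODE with real-analytic coefficients (the curvature tensor of $g$ restricted to $\gamma$), hence extends to a holomorphic Jacobi field $Y^{\c}_{w}$ along $\gamma_{\c}$ satisfying the corresponding holomorphic Jacobi equation. Pulling this back along $\zeta\mapsto i\zeta$ and invoking part (1), the field $\zeta\mapsto Y^{\c}_{w}(\gamma_{\c}(i\zeta))$ along $\Gamma$ again solves the holomorphic Jacobi equation along $\Gamma$ and vanishes at $\zeta=0$, and the chain rule computes its covariant derivative there out of $\nabla_{\dot{\gamma}}Y_{w}(\pi(v))=w$ (up to the scalar factor that the substitution $\zeta\mapsto i\zeta$ produces). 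Comparing these initial data with those of $W^{\c}_{w}$, and using $\c$-linearity of the Jacobi equation in its initial data together with uniqueness for this linear holomorphic ODE, yields (2). For (3), by the definition (\ref{eqn:Expdef}) the map $\Phi$ equals $Exp_{\c}$ precomposed with fibrewise multiplication by $i$, so $D\Phi_{tv}(w)=DExp_{\c,\,itv}(iw)$; since $itv$ is the time-$t$ point of the imaginary geodesic $\Gamma$, formula (\ref{eqn:DExpC}) rewrites this as a holomorphic Jacobi field along $\Gamma$, which by (1) and (2) is $Y^{\c}_{w}(\gamma_{\c}(itv))$. Alternatively, one may differentiate the variation $(t,\epsilon)\mapsto\Phi\bigl(t(v+\epsilon w)\bigr)$ of $g_{\c}$-geodesics directly and recognize the variation field as the holomorphic extension of $Y_w$.

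The hard part here is not analysis but bookkeeping. The identity theorem together with existence and uniqueness for holomorphic ODEs makes the analytic continuation of the geodesic and Jacobi equations from $\r$ into $\c$ routine. What needs care is to fix once and for all the normalizations in play --- unit speed versus $\dot{\gamma}(0)=v$ for $\gamma$, the scalings implicit in $Exp$ and $Exp_{\c}$, and the identification of $w$ as a vertical vector --- and then to track the scalar factors of $i$ that the substitution $\zeta\mapsto i\zeta$ introduces into the covariant derivatives of Jacobi fields, so that (1)--(3) come out precisely in the stated form.
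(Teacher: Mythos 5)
Your proposal is correct and follows essentially the same route as the paper: the paper's (very terse) proof likewise derives (1) and (2) from the holomorphy of $\gamma_{\c}, Y^{\c}_w, \Gamma, W^{\c}_w$ together with uniqueness for holomorphic ODEs, and obtains (3) by combining (\ref{eqn:DExpC}) with the definition (\ref{eqn:Expdef}) of $\Phi$ and parts (1)--(2). Your explicit attention to the affine reparametrization $\zeta\mapsto i\zeta$ and to the resulting factors of $i$ in the Jacobi-field initial data (which cancel in (3) between the $i$ coming from $D(i\cdot)$ and the $i$ in $\dot\Gamma = i\dot\gamma_{\c}$) is bookkeeping the paper leaves implicit, but it does not change the argument.
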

\vs\vs
(1) and (2) follow from the holomorphy of $\gamma_{\c}, Y^{\c}_w, \Gamma,$ and $W^{\c}_w$, and the uniqueness theorem for holomorphic ODE.  (3) is simply (\ref{eqn:DExpC}) taking into account the definition of $\Phi$ (\ref{eqn:Expdef}), and parts (1) and (2). \\
%
\qed
\vs\vs
It will be convenient later (section \ref{sec:U2cases}) to compute the rank of $D\Phi$ calculating Jacobi fields in terms of the left invariant parallelism on $TG$, whereas it is easy to compute the differential $D\Phi$ acting on right invariant fields. For purposes of comparison, we will need to recall the well-known formula relating the two. If $X \in \mfg \cong T_eG$, denote by $\eta_X(g) =DR_{g}(X) \in T_gG$ its right-invariant extension over all of $G$, and $\xi_X(g) = DL_{g} X \in T_gG$ its left-invariant extension. Then one has
\vs
	\begin{equation}\label{eqn:xi2eta}
		\eta_X(g) = \xi_{Ad(g^{-1})X}(g) \in T_gG.
	\end{equation}
\vs\vs
A similar equation holds for $G_{\c}$ and $\mfg_{\c}$. We record here finally the general computation of the rank of $D\Phi_{t v}$. Let $X_1,\ldots, X_d$ be a real basis of $\mfg$ and let $Y_{X_i}$ be the Jacobi fields as in (\ref{eqn:DExp}) for $w = \eta_{X_i} \in T_{\pi(v)}G.$ Express the $Y_i$ in the basis $\eta_{X-i}$,so that one has
\vs
\begin{equation}\label{eqn:eta2Y}
	Y^{\c}_i(\gamma_{\c}(\zeta)) = a_{i, 1}(\zeta) \; \eta^{1,0}_{X_1,\c}(\gamma_{\c}(\zeta)) + \ldots + a_{i, d}(\zeta) \; \eta^{1,0}_{X_d, \c}(\gamma_{\c}(\zeta)).
\end{equation}
\vs\vs
Assembling this, we have a numerical statement summarizing this discussion as follows.
\vs
\begin{lemma}\label{lem:rankDPhi} The rank of $D\Phi_{t v}$ is given by
	\begin{equation}\label{eqn:rank}
		\text{rank}\; D\Phi_{t v} = d\, + \;{\text{rank}}\; {\; \text{Im} \;} \left(\begin{array}{ccc} a_{1,1}(i t) & \cdots & a_{1,d}(i t) \\ && \\ & \cdots & \\ && \\ a_{d, 1}(i t) & \cdots & a_{d,d}(i t) \end{array} \right),
	\end{equation}
where the rank of $D\Phi_{t v}$ means the real rank of the differential, and $Im$ is the imaginary part.
\end{lemma}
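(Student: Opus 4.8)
The plan is to combine the two descriptions of $D\Phi_{tv}$ already assembled in the excerpt: on the ``horizontal'' subspace of $T_v(TG)$ spanned by the right-invariant lifts $\tilde\eta_{X}$, the differential is injective with image spanned by the holomorphic right-invariant fields $\eta^{1,0}_{X,\c}(\Phi(v))$; on the complementary ``vertical'' subspace $VT_v \cong T_{\pi(v)}G$, the differential is given, by Lemma \ref{lem:ICs}(3), by $w \mapsto Y^{\c}_w(\gamma_{\c}(itv))$, the holomorphic Jacobi field. First I would fix the real basis $X_1,\dots,X_d$ of $\mfg$ and work in the trivialization $T^{1,0}G_{\c} \cong G_{\c}\times\mfg_{\c}$ by left-invariant fields, using \eqref{eqn:xi2eta} to pass freely between the left- and right-invariant frames at the point $\gamma_{\c}(itv)$; the change of frame is the invertible matrix $Ad$, so it does not affect any rank computation. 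Writing the Jacobi field $Y^{\c}_i$ in the frame $\eta^{1,0}_{X_j,\c}$ as in \eqref{eqn:eta2Y} gives the $d\times d$ matrix $\bigl(a_{i,j}(it)\bigr)$ over $\c$.

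The core of the argument is then a linear-algebra count for the real-linear map $D\Phi_{tv}\colon T_v(TG)\to T^{1,0}_{\Phi(v)}G_{\c}$, where the target is a complex $d$-dimensional space regarded as a real $2d$-dimensional space. The key observation is that on the horizontal subspace $D\Phi$ already surjects onto a totally real $d$-dimensional real subspace $V_0 := \mathrm{span}_{\r}\{\eta^{1,0}_{X_j,\c}(\Phi(v))\}$ of $T^{1,0}_{\Phi(v)}G_{\c}$, since the lifts $\tilde\eta_X$ are $\tilde\sigma$-real (this is the $G$-equivariance statement $D\Phi_v(\tilde\eta_x)=\eta^{1,0}_{x,\c}(\Phi(v))$). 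Because $V_0$ is totally real of maximal dimension, $T^{1,0}_{\Phi(v)}G_{\c} = V_0 \oplus J V_0$ as real vector spaces, where $J$ is multiplication by $i$. The image of the vertical part, the span of the vectors $Y^{\c}_i(\gamma_{\c}(itv)) = \sum_j a_{i,j}(it)\,\eta^{1,0}_{X_j,\c}$, contributes to the total rank only through what it adds \emph{transversally} to $V_0$, i.e.\ through its projection onto $J V_0$; writing $a_{i,j}(it) = \re a_{i,j}(it) + i\,\im a_{i,j}(it)$, the real part lands in $V_0$ and is already accounted for, while the projection to $JV_0$ is governed precisely by the matrix $\bigl(\im a_{i,j}(it)\bigr)$. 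Hence the total real rank is $\dim_{\r}V_0$ plus the rank of the imaginary-part matrix, which is the claimed formula $d + \rank\,\mathrm{Im}\,(a_{i,j}(it))$.

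The one point requiring care — and the main obstacle — is making the ``sum of two ranks'' rigorous, i.e.\ verifying that there is no further collapse or overlap between the horizontal image $V_0$ and the vertical image beyond what the decomposition $T^{1,0} = V_0\oplus JV_0$ records. I would handle this by choosing coordinates adapted to the real splitting: pick the real basis $\{\eta^{1,0}_{X_j,\c}\}$ of $V_0$ and its $J$-image as a real basis of the $2d$-dimensional target, so that a vector $\sum_j c_j\,\eta^{1,0}_{X_j,\c}$ with $c_j\in\c$ has coordinates $(\re c_j, \im c_j)_j$. In these coordinates the matrix of $D\Phi_{tv}$, with respect to the basis $(\tilde\eta_{X_i})_i$ followed by $(w_i)_i$ of the source, is block lower-triangular: the horizontal block is the identity $I_d$ into the $\re$-coordinates with zero in the $\im$-coordinates, and the vertical block has $\re a_{i,j}$ in the $\re$-coordinates and $\im a_{i,j}$ in the $\im$-coordinates. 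The rank of such a block-triangular matrix is $d$ (from the identity block) plus the rank of the Schur complement, which here is exactly $\mathrm{Im}\,(a_{i,j}(it))$ since the horizontal block kills the $\im$-row entirely. That computation closes the argument; everything else is the bookkeeping of \eqref{eqn:xi2eta}, \eqref{eqn:eta2Y}, and Lemma \ref{lem:ICs} already in place.
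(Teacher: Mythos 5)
Your proposal is correct and follows exactly the route the paper intends: the paper itself gives no written proof beyond the preceding discussion (the lemma is stated as ``assembling this discussion''), and your block-triangular computation $\bigl(\begin{smallmatrix} I_d & \re A \\ 0 & \im A\end{smallmatrix}\bigr)$ with respect to the splitting of the target into the totally real subspace $V_0=\mathrm{span}_{\r}\{\eta^{1,0}_{X_j,\c}\}$ and $JV_0$ is precisely the linear algebra being left to the reader. No gaps.
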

\qed
\vs\vs\vs
%
%
%
%
%
%
%
%
\section{The Free Rigid Body and Complex Integrals}\label{sec:complexintegrals}
\vs
We will use, from Arnold \cite{via}, that the geodesic flow for a left-invariant metric on a compact Lie group is equivalent to the rotation of a rigid body with frame in $G$ about its center of mass. At this point we specialize to the case $G = SU(2)$, which has two explicit mechanical integrals.  Notation as above, at the beginning of section \ref{sec:Gcgeom}, these are the energy $E = (Av,v),$ and the total angular momentum $M = (A^2v,v)$. Taken together with any Hamiltonian generating a one-parameter group in $G$ acting on the tangent bundle shows that the geodesic system, with Hamiltonian $E$, is completely integrable. If we write the complex geodesic in $T^{1,0}G_{\c}$ as $\tilde{\gamma}(\zeta)  = (\gamma(\zeta), v(\zeta)) \in G_{\c} \times \mfg_{\c},$ with $\zeta \in \c$, the quadratic integrals are functions of $v(\zeta)$ alone, so that $E(v(\zeta)), M(v(\zeta))$ are constant along $\tilde{\gamma}$. If the complex geodesic is real, then the values $E(v(\zeta_0)) = a, M(v(\zeta_0)) = b$, are real and positive. The quadrics $\{ E(v) = a\}, \{M(v) = b\}$ are then non-singular for suitable $A$'s, $a$ and $b$, as are their projective completions $\q_{E,a}, \q_{M,b}$, {\em resp.}, in $\p^3 \supset \mfg_{\c}$. In homogeneous coordinates $z_1, z_2, z_3, w$, the quadrics $\q_{E,a}, \q_{M,b}$ are given by 
\vs
\begin{equation} \label{eqn:qe}
	0 = \lambda_1 \, z_1^2 + \lambda_2 \, z_2^2 + \lambda_3 \, z_3^2 - a \, w^2,
\end{equation}
\begin{equation} \label{eqn:qm}
	0 = \lambda_1^2 \, z_1^2 + \lambda_2^2 \, z_2^2 + \lambda_3^2 \, z_3^2 - b \, w^2,
\end{equation}\vs
{\em respectively}. Here we have assumed, without loss of generality, that $\lambda_1, \lambda_2, \lambda_3 \in \r^+$ are the eigenvalues of $A$, and that $A$ has been diagonalized. One should observe that this can be done by the adjoint action of $SU(2)$ on its Lie algebra. If $[\lambda_1: \lambda_2: \lambda_3: a] \neq [\lambda_1^2: \lambda_2^2: \lambda_3^2: b]$, then the intersection of these two quadrics is a curve in $\p^3$ which we denote by $C_{a,b}$. Our analysis will focus on the properties of this curve. We first check the variables $a, b$ enjoy a certain independence. 
\vs
\begin{lemma} \label{lem:gpab}
	The set of real parameters $(x_1, x_2, x_3) \in \r^3$ such that $a = E(x)$ and $b = M(x)$ verify 
	\begin{equation} \label{eqn:ineq}
		b \neq \lambda_k \; a, \; k = 1, 2, 3,
	\end{equation}
has non-empty interior.
\end{lemma}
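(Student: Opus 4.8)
The plan is to translate the three conditions $b \neq \lambda_k a$ into the non-vanishing of three explicit homogeneous quadratic polynomials in $x = (x_1,x_2,x_3)$, and then to observe that the complement of their common zero locus is open and dense.

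First I would write, with $A$ diagonalized as in the text, $E(x) = \lambda_1 x_1^2 + \lambda_2 x_2^2 + \lambda_3 x_3^2$ and $M(x) = \lambda_1^2 x_1^2 + \lambda_2^2 x_2^2 + \lambda_3^2 x_3^2$. For each $k \in \{1,2,3\}$ the equation $M(x) = \lambda_k\, E(x)$ rearranges to
\[
	Q_k(x) := \sum_{i=1}^{3} \lambda_i (\lambda_i - \lambda_k)\, x_i^2 = 0,
\]
so the set described in the lemma is precisely $\r^3 \setminus (Z_1 \cup Z_2 \cup Z_3)$, where $Z_k := \{x \in \r^3 : Q_k(x) = 0\}$.

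Next I would check that each $Q_k$ is a nonzero polynomial. This is where the standing hypothesis enters: $g_A$ is not bi-invariant, i.e. the $\lambda_i$ are not all equal, so for every fixed $k$ there is an index $i$ with $\lambda_i \neq \lambda_k$, and since $\lambda_i > 0$ the coefficient $\lambda_i(\lambda_i - \lambda_k)$ is nonzero; hence $Q_k \not\equiv 0$. A nonzero polynomial on $\r^3$ cannot vanish on a nonempty open set, so each $Z_k$ is closed with empty interior. A finite union of closed sets with empty interior again has empty interior, so $\r^3 \setminus (Z_1 \cup Z_2 \cup Z_3)$ is open and dense; in particular it has non-empty interior, which is the assertion.

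The only point that really needs to be stated out loud — the sole obstacle — is the hypothesis that $A$ is not a scalar matrix: if $\lambda_1 = \lambda_2 = \lambda_3$ then $M \equiv \lambda_1 E$, every $Q_k$ vanishes identically, and the set in the lemma is empty. Once that is recorded, the argument is entirely elementary.
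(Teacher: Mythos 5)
Your proof is correct, but it takes a different route from the paper's. The paper considers the map $F(x) = (E(x), M(x))$ and shows that its differential has rank $2$ whenever all $x_i$ are nonzero (using that the $\lambda_i$ are not all equal), so that $F$ is a submersion there; the image of a small ball then contains an open set in the $(a,b)$-plane, from which one removes the three lines $b = \lambda_k a$ and pulls back. You instead work entirely upstairs in $x$-space: the three conditions become the non-vanishing of the explicit quadratic forms $Q_k(x) = \sum_i \lambda_i(\lambda_i - \lambda_k)x_i^2$, each of which is a nonzero polynomial precisely because the $\lambda_i$ are not all equal, so the bad set is a finite union of closed sets with empty interior. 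Your argument is more elementary (no rank computation, no open mapping step) and makes the role of the hypothesis ``$A$ is not scalar'' completely transparent, including the observation that the lemma is false without it; the paper's submersion argument is slightly more robust in that it would survive integrals $E, M$ that are not quadratic forms, but for the case at hand your direct computation is cleaner. Both establish the stronger conclusion that the good set is open and dense.
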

We will call $a, b$ {\em generic} if $a, b$ satisfy the conclusion of the lemma.
\begin{proof}
It suffices to consider the map 
\[
	F(x) := \left(\begin{array}{c} E(x) \\  \\ M(x) \end{array}\right)
\]
and show the differential is surjective for some $x \in \r^3$. But this differential is
\[
	DF(x) = \left(\begin{array}{ccc} 2\lambda_1 \, x_1 & 2\lambda_2 \, x_2 & 2\lambda_3 x_3 \\ & & \\ 2\lambda_1^2 \, x_1 & 2\lambda_2^2 \, x_2 & 2\lambda_3^2 \, x_3 \end{array} \right),
\]
whose minor determinants are (up to a factor of $\pm2$) 
\[
\begin{array}{l}
	\lambda_1 \, \lambda_2 \, (\lambda_2 - \lambda_1) \, x_1 \, x_2,\\
	\\
	\lambda_1 \, \lambda_3 \, (\lambda_3 - \lambda_1) \, x_1 \, x_3, \; \text{and}\\
	\\
	\lambda_2 \, \lambda_3 \, (\lambda_3 - \lambda_2) \, x_2 \, x_3.
\end{array}
\]
Since we are assuming that not all $\lambda_i$ are equal, then choosing all $x_i$ non-zero will give maximal rank. 
\end{proof}
\vs
Let us next study the curve $C_{a,b}$ where we assume $a, b$ are generic. 
\vs
\begin{lemma} \label{lem:lambdadistinct}
	Suppose the eigenvalues $\lambda_i$ of $A$ are pairwise distinct, and $a, b$ are generic, as in lemma \ref{lem:gpab}. Then the projective curve $C_{a,b}$ is smooth, of genus 1.
\end{lemma}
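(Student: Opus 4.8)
The plan is to show that $C_{a,b}$, being a complete intersection of two quadrics in $\p^3$, is a smooth elliptic curve by a direct application of Bertini-type reasoning combined with an explicit check that the quadrics in the pencil they generate have at most four singular members and that $\q_{E,a} \cap \q_{M,b}$ avoids all of those singular points. First I would recall the classical fact that a smooth complete intersection of two quadrics in $\p^3$ is a curve of degree $4$ and arithmetic genus $1$ (by adjunction, $2g-2 = \deg(K_C) = \deg\bigl((K_{\p^3} + 2H + 2H)|_C\bigr) = (-4+2+2)\cdot 4 = 0$), so the entire content is smoothness, after which genus $1$ is automatic.

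To establish smoothness, I would work in the affine chart $w = 1$ first (and handle the hyperplane at infinity $w=0$ separately). Both quadrics are diagonal: $Q_E = \sum \lambda_i z_i^2 - a$ and $Q_M = \sum \lambda_i^2 z_i^2 - b$. A common zero $p=(z_1,z_2,z_3)$ is a singular point of the intersection curve exactly when the two gradient vectors $(\lambda_1 z_1, \lambda_2 z_2, \lambda_3 z_3)$ and $(\lambda_1^2 z_1, \lambda_2^2 z_2, \lambda_3^2 z_3)$ are linearly dependent. Writing out the $2\times 2$ minors, dependence forces $\lambda_i\lambda_j(\lambda_j - \lambda_i)\, z_i z_j = 0$ for all pairs $i<j$; since the $\lambda_i$ are pairwise distinct and nonzero, this forces $z_i z_j = 0$ for every pair, i.e.\ at most one coordinate $z_i$ is nonzero. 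So any singular point of $C_{a,b}$ in this chart lies on a coordinate axis, say $z_2 = z_3 = 0$. Plugging into the two quadric equations gives $\lambda_1 z_1^2 = a$ and $\lambda_1^2 z_1^2 = b$, whence $b = \lambda_1 a$ — which is exactly excluded by the genericity hypothesis $b \neq \lambda_k a$ from Lemma \ref{lem:gpab}. The same argument rules out singular points on the other two axes. Then I would treat the locus $w = 0$: there the equations become $\sum \lambda_i z_i^2 = 0$ and $\sum \lambda_i^2 z_i^2 = 0$, and the same minor computation shows any singular point has at most one nonzero $z_i$, say $z_1\neq 0$, forcing $\lambda_1 z_1^2 = 0$, impossible. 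Hence $C_{a,b} \cap \{w=0\}$ is also smooth, and indeed one checks it is a reduced finite set of points, so $C_{a,b}$ is a smooth curve everywhere.

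Finally, I would note that smoothness already guarantees $C_{a,b}$ is a genuine complete intersection (the two quadrics meet transversally, so the scheme-theoretic intersection is a Cohen--Macaulay curve of the expected degree $4$), and since we assumed $[\lambda_1:\lambda_2:\lambda_3:a] \neq [\lambda_1^2:\lambda_2^2:\lambda_3^2:b]$, the two quadrics are distinct, so their intersection is one-dimensional rather than a quadric surface. Combining with the adjunction computation above yields genus $1$. The curve is also connected: a smooth complete intersection in projective space is connected by the Lefschetz hyperplane theorem (or directly, $h^0(\mathcal{O}_C) = 1$ from the Koszul resolution $0 \to \mathcal{O}_{\p^3}(-4) \to \mathcal{O}_{\p^3}(-2)^{\oplus 2} \to \mathcal{O}_{\p^3} \to \mathcal{O}_C \to 0$), so ``genus $1$'' is unambiguous.

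The main obstacle — really the only subtle point — is making sure the genericity condition from Lemma \ref{lem:gpab} is precisely strong enough to kill every potential singular point, including those at infinity, and that the hypothesis $\lambda_i$ pairwise distinct is used exactly where needed (to turn the vanishing of the minors into the vanishing of the products $z_i z_j$). Everything else is the standard toolkit for complete intersections of quadrics; I would keep that part brief and cite adjunction rather than recomputing the Koszul complex in detail.
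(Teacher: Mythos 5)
Your proposal is correct and follows essentially the same route as the paper: smoothness via the Jacobian criterion, using the vanishing of the $2\times2$ minors of the gradient matrix together with the distinctness of the $\lambda_i$ and the genericity condition $b \neq \lambda_k a$ to force too many coordinates to vanish, and then genus $1$ by adjunction. The only cosmetic differences are that you split the smoothness check into the affine chart $w=1$ and the locus $w=0$, whereas the paper runs the homogeneous $2\times 4$ Jacobian argument in a single step, and you compute the genus by adjunction in $\p^3$ rather than on the quadric surface $\q_{E,a}$.
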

\begin{proof} Let $B$ be the two by four matrix
\[
	B = \left(\begin{array}{cccc} \lambda_1 z_1 & \lambda_2 \, z_2 & \lambda_3 \, z_3 & - a \, w \\	& & & \\ \lambda_1^2 \, z_1 & \lambda_2^2 \, z_2 & \lambda_3^2 \, z_3 & - b \, w \end{array} \right).
\]  
We wish to show that there are no $(z_1, z_2, z_3, w) \neq (0,0,0,0)$ which solve (\ref{eqn:qe}) and (\ref{eqn:qm}), and for which the rank of $B$ is $<$ 2, or the six $2\times 2$ minors of $B$ are 0:
\[
\begin{array}{rcl} 
	0 & = & \lambda_1 \, \lambda_2 (\lambda_2 - \lambda_1) \, z_1 \, z_2 \\
	\\
	0 & = & \lambda_1 \, \lambda_3 \,(\lambda_3 - \lambda_1)\, z_1\, z_3 \\
	\\
	0 & = & \lambda_2 \, \lambda_3 \, (\lambda_3 - \lambda_2)\, z_2 \, z_3 \\
	\\
	0 & = & \lambda_1\, (b - \lambda_1 \, a) \, z_1 \, w\\
	\\
	0 & = & \lambda_2 \, (b - \lambda_2 \, a) \, z_2 \, w \\
	\\
	0 & = & \lambda_3 \, (b - \lambda_3 \, a) \, z_3 \, w.
\end{array}
\]
On the other hand, it is clear that any 4-tuple $(z_1, z_2, z_3, w)$ satisfying (\ref{eqn:qe}) or (\ref{eqn:qm}) must have at least two non-zero components, while on the other hand, all the constants in this last array of equations are non-zero under our assumptions. This is a contradiction, hence $C_{a,b}$ is smooth.
\vs
To complete the proof, we use the classical genus formula
\[
	g(C_{a,b}) = 1 + \frac12 \; C_{a,b} \cdot (C_{a,b} + K_{\q_{E,a}}),
\]
where $K_{\q_{E,a}}$ is the canonical class of $\q_{E,a} \sim -2 H$, where $H$ in turn is the hyperplane section of $\q_{E,a} \subset \p^3$. Since $C_{a,b} \sim 2 H$, we see that $g(C) = 1$, as claimed.
\vs
\end{proof}
Now the map $v: \c \to C_{a,b} \subset \p^3$ associated to a complexified geodesic with initial conditions $a = E(\gamma(0)), b = M(\gamma(0)),$ must actually map into the affine piece $C^{{aff}}_{a,b} = C_{a,b} \cap \mfg_{\c} = C_{a,b} \setminus C_{a,b} \cap \{w = 0\}$. But this last intersection is non-empty and a holomorphic map of $\c$ to $C^{aff}_{a,b}$ must factor through the universal covering $\pi: \mathbb{C} \to C_{a,b}$, but must omit the infinite set $\pi^{-1}(C_{a,b} \cap\{w = 0\})$. Thus, the map $c$ must be constant, by Picard's theorem. In real terms, this means that the real geodesic satisfies
\[
	\dot{\gamma} \equiv \xi \in \mfg,
\]
If we assume, without loss of generality, that $\gamma(0) = e \in G,$ this means that $\gamma(s)$ is the 1-parameter subgroup $\exp(s \xi)$ of $G$. Now this conclusion holds true for all initial conditions $\dot{\gamma}(0) = \xi_0$ in a dense open set of $\mfg = T_0G.$ By continuity of solution of ODEs, this must hold true for all geodesics passing through 0, and hence, by left translation, for all geodesics. However, it is well-known that any left invariant metric for which all geodesics through $e$ are 1-parameter subgroups is necessarily bi-invariant and hence round (see e.g. \cite{o}, ch. 11, Proposition 9). This would contradict our assumption that the $\lambda_i$ are all distinct. 
\vs
{\em Remark:} One can give a more elementary proof of this last statement using (\ref{eqn:geodeqns}) in the appendix \cite{web} to show that, if the $\lambda$'s are distinct, $\gamma(t)$ is a geodesic with $\gamma(0) = e \in G$, then $T = \dot{\gamma}$ is constant in the left invariant frame if and only if $\gamma(t)$ is a geodesic $\exp(t \, \nu \, \xi_i), i = 1, 2,$ or $3$, for some real constant $\nu$.
\vs
Next, consider the case where two of the $\lambda_i$ are equal. This is equivalent to the fact that the identity component of the isometry group of $g_A$ is $G\times S^1$ where the extra circle group is a 1-parameter subgroup of $G$ operating on the right. To be specific, we set $\lambda_1 = \lambda_2$, the other case being similar. In this case, equations (\ref{eqn:qe}) and (\ref{eqn:qm}) can be reduced to the pair of equations (\ref{eqn:qe}) and
\begin{equation} \label{eqn:split}
	\begin{array}{rcl} 0 & = & (\lambda_1 \lambda_3 - \lambda_3^2) \, z_3^2 - (\lambda_1 \, a - b) \, w^2 \\
					& & \\
					& = & \lambda_3 \, (\lambda_1 - \lambda_3) \, ( z_3^2 - \frac{\lambda_1 \, a - b}{\lambda_3 \, (\lambda_1 - \lambda_3)} \, w^2)
	\end{array}
\end{equation}
Using our notation for the initial condition $\dot{\gamma}(0) = \xi_0 = x_1^o \, \xi_1 + x_2^o \, \xi_2 + x_3^o \, \xi_3,$ we can express
\[
	a = E(x^o), b = M(x^o),
\]
from which we can solve for 
\[
	\frac{\lambda_1 \, a - b}{\lambda_3 \, (\lambda_1 - \lambda_3)} = (x_3^o)^2.
\]	
It follows that (\ref{eqn:split}) splits over $\r$ and the curve defined by (\ref{eqn:qe}) and (\ref{eqn:split}) is the union of two curves
\[
	C^+_{a,b} = \q_{E,a} \cap \{ z_3 = x_3^o \, w\} 
\]
and
\[	
	C^-_{a,b} = \q_{E,a} \cap \{z_3 = - x_3^o \, w\}. 
\]
Each of these curves is a conic $\p^1 \subset \p^3$ which intersects the plane at infinity $\{w = 0\}$ in two points generically, so $C^{\text{aff}, \pm}_{a,b}$ is generically biholomorphic to $\c^*$, which certainly contains the images of non-trivial entire mappings from $\c$. Thus the method of proof for the case of $\lambda_i$ pairwise distinct will not work here. In fact, if $\lambda_1 = \lambda_2$ then the tube of the metric $g_A$ is sometimes entire.
%
%
%
%
%
%
%
%
%
%
%
\section{The cases $\lambda_1 = \lambda_2$} \label{sec:U2cases}
\vs
By scaling and renumbering, we can reduce to the case 
\begin{equation}\label{eqn:U2sym}
	g_A(v,v) = v_1^2 + v_2^2 + \lambda v_3^2,
\end{equation}
where $\lambda > 0$. The behavior of the associated tube is different according to whether $\lambda \leq 1,$ or $\lambda > 1$. We treat first the case where the tube is entire.
\vs
\begin{proposition} \label{prop:poscase}
	Let $g_A$ be as in (\ref{eqn:U2sym}) with $\lambda \leq 1$. Then the Grauert tube of $g_A$ is entire.
\end{proposition}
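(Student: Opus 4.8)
The plan is to realize each metric $g_\lambda$ with $\lambda \le 1$ as a Riemannian quotient of a manifold that is already known to have an entire Grauert tube, and then invoke the functoriality result of Sz\H{o}ke \cite{s2} (or Aguilar \cite{a1}) that an entire tube descends to quotients by free, properly discontinuous isometric actions, and more generally that fibrations of Riemannian submersion type with entire total-space tubes produce entire base tubes. Concretely, the Berger metrics $g_\lambda$ on $SU(2)$ with $\lambda \le 1$ are the metrics obtained by shrinking the Hopf fiber direction $\xi_3$ relative to the round metric; these arise as quotients of the product $S^3 \times \mathbb{R}$ (or $SU(2)\times \mathbb{R}$), respectively as the total space of a Riemannian submersion from a product $S^2(\text{round}) \times S^1_r \to$ something, depending on whether one views the $S^1$-factor as being collapsed or quotiented. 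The key point is that for $\lambda \le 1$ the Berger sphere sits as the base of a Riemannian submersion $S^3 \times S^1 \to SU(2)$ (diagonal circle action), or equivalently as a quotient, where the larger space carries a product metric whose tube is entire because the tube of a product is the product of tubes and both $S^3$-round and $S^1$ have entire tubes.

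The steps I would carry out, in order, are: (1) write down explicitly, using the left-invariant coframe dual to $\xi_1,\xi_2,\xi_3$ and the structure constants recorded in the appendix \cite{web}, the diagonal isometric $S^1$-action on $(S^3,\text{round}) \times (S^1, d\theta^2)$ whose orbit space, with the submersion metric, is isometric to $(SU(2), g_\lambda)$; here the parameter $\lambda \le 1$ appears as the ratio determined by the radius of the auxiliary circle, and the constraint $\lambda \le 1$ is exactly what makes the Hopf direction shrink rather than grow. (2) Check this identification is a genuine Riemannian submersion / quotient by comparing the two metrics on horizontal vectors — this is a short computation with the $O'Neill$ formulas or, more simply, a direct comparison of left-invariant metrics. (3) Observe that $(S^3,\text{round})$ has entire tube (bi-invariant metric on $SU(2)$, by Sz\H{o}ke \cite{s}) and $(S^1,d\theta^2)$ has entire tube (flat, tube is $\mathbb{C}^*$ or $\mathbb{C}/\mathbb{Z}$), hence the product has entire tube since the adapted complex structure on a Riemannian product is the product of the adapted complex structures. (4) Apply the descent result of \cite{s2} (alternatively \cite{a1}): if $(N,h)\to(M,g)$ is a Riemannian submersion with totally geodesic fibers obtained from an isometric group action, and $N$ has an entire tube, then so does $M$. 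This yields that $g_\lambda$ has an entire tube for all $\lambda \in (0,1]$.

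The main obstacle I expect is step (1)–(2): pinning down the correct auxiliary space and the correct isometric action so that the quotient metric is \emph{exactly} $g_\lambda$ with the right range of $\lambda$, and verifying that the relevant hypotheses of Sz\H{o}ke's quotient theorem (fibers totally geodesic, action free or the submersion of the required type) actually hold. There is a genuine subtlety about whether one needs a free action (giving a smooth quotient manifold) or merely a Riemannian submersion, and whether the circle being quotiented must itself be a round circle of a particular size; getting the bookkeeping of radii right is where the condition $\lambda \le 1$ enters and must be handled with care. A secondary point to verify is that the product tube statement is legitimately available — that the adapted complex structure on $TM_1 \times TM_2$ for a product metric restricts to the adapted complex structures on each factor and is entire whenever both factors are — which is standard but should be cited or noted. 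Once these identifications are in place the conclusion is immediate from the cited theorems, so essentially all the content is in the explicit geometric realization of the Berger spheres with $\lambda \le 1$ as quotients.
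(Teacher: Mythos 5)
Your proposal is correct and follows essentially the same route as the paper: the paper realizes $g_\lambda$ as the Riemannian submersion quotient of $(G,g_I)\times(S^1,\mu\,d\theta^2)$ by the diagonal circle action $e^{it}\cdot(g,e^{i\theta})=(g\,e^{2t\xi_3},e^{i(\theta+t)})$ and invokes Sz\H{o}ke \cite{s2} (cf.\ \cite{a1}), with the radius bookkeeping you flag as the main step yielding $\lambda=\mu/(2+\mu)\in(0,1)$ for $\mu>0$ and the round case $\lambda=1$ as the limit $\mu\to+\infty$.
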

\begin{proof} This is an immediate consequence of a result of Sz\H{o}ke \cite{s2} (cf also \cite{a1}). The metric (\ref{eqn:U2sym}) has an isometry group $G \times S^1$ where the factor $S^1$ acts by right multiplication on $G$ by the 1-parameter subgroup $e^{t\, \xi_3}$ on the right, i.e., by $R_{e^{t\, \xi_3}}$, periodic of period $2\pi$, and the factor $G$ acts by left multiplication $L_g$ on $G$. Consider the manifold $G \times S^1$, where $G$ is given the bi-invariant metric of sectional curvature 1, which is the special case of (\ref{eqn:U2sym}) with $\lambda_1= 1$, and the $S^1$ factor is given the metric $\mu d\theta^2,$ where $d\theta^2$ is the invariant metric of length $2\pi$ on $S^1$ and $\mu$ is a real constant $> 0$. So $G\times S^1$ has the metric $g_{\mu}$ defined by
\begin{equation}  \label{eqn:gmu}
	g_{\mu} = g_I \oplus \mu \, d\theta^2.
\end{equation}

Consider the quotient manifold of $G\times S^1$ by the action of $S^1 = \{e^{i t}\}$ on $G\times S^1$ given by $e^{i t} \cdot (g, e^{i \theta}) = (g\cdot e^{2 t\, \xi_3}, e^{i(\theta + t)}).$ Then it is easy to see that the quotient manifold $G\times S^1/\sim$ is diffeomorphic to $G$. Let $\bar{g}_{\mu}$ be the Riemannian submersion metric from (\ref{eqn:gmu}). By construction, this metric is invariant under left translation by $G$ and by right translation by $e^{- t \, \xi_3}$. Consequently, it is one of the left invariant metrics on  $G$ with an extra isometric action from the right. It is an easy calculation to see which ones arise in this manner. Let 
\begin{equation} 
	\label{eqn:A}
		A = \left(\begin{array}{ccc} 1 & & \\ & 1 & \\ & & \lambda \end{array}\right).
\end{equation}
Explicitly, we will normalize our metrics so that $g_A(\xi_1, \xi_1) = g_A(\xi_2, \xi_2) = \frac14,$ and $g_A(\xi_3, \xi_3) = \frac{\lambda}4$.\footnote{This means that instead of $g_A(\cdot,\cdot) = -B(A(\cdot), \cdot)$, we now have $g_A(\cdot, \cdot) = -\frac18 B(A(\cdot), \cdot)$.} With this normalization, the case $A = I$ gives the round metric with sectional curvature $K = 1$ on $G \cong S^3$.
\vs
We want to see which $\lambda$ in (\ref{eqn:U2sym}) will arise from Sz\H{o}ke's construction. Computing at $(e,1) \in G\times S^1$, the tangent vector to the $S^1$ action we will quotient by is given by $(2\xi_3, \frac{\partial \;\;}{\partial \theta})$. The projection of $(\xi_3, 0)$ onto this direction is given by
\begin{equation} \label{eqn:Tproj}
	\begin{array}{rcl}
	(\xi_3,0)^T & = & \frac{((\xi_3, 0), (\xi_3, \frac{\partial \;\;}{\partial \theta}))}{|\xi_3 + \frac{\partial \;\;}{\partial \theta}|^2} (\xi_3, \frac{\partial \;\;}{\partial \theta}), \\
	& & \\
	& = & \frac2{2+\mu} (\xi_3, \dbdth)
	\end{array}
\end{equation}
and hence the normal component is given by 
\begin{equation} 
	\label{eqn:Nproj}
	\begin{array}{rcl}
		(\xi_3,0)^N & = & (\xi_3,0) - (\xi_3,0)^T \\
		&	&	\\
		& = & (\frac{\mu}{2+\mu} \xi_3, \frac2{2+\mu} \dbdth). 	
	\end{array}
\end{equation}
Thus, 
\[
	|\xi_3^N|^2 = 2 \frac{\mu}{2+\mu},
\]
which means that the quotient metric on $G$ is $g_A$ with $A$ in (\ref{eqn:A}) having $\lambda = \frac{\mu}{2 + \mu}.$ Hence, any $\lambda \in (0, 1)$ appears. The limit case $\mu = +\infty$ is just the case of the round sphere, $\lambda = 1$, which also has an entire tube, of course.
\end{proof}
\vs
Finally, there remains the case of (\ref{eqn:A}) with $\lambda > 1$. These metrics have tubes of finite radius, and we study the complexified Jacobi equation to determine this radius. We first observe that, in contrast to the case of a generic $A$, the map $Exp_{g_{A,\c}}$ is globally defined for real initial conditions, i.e., on $TG$.
\begin{lemma}\label{lem:Expdefd}
	For a metric $g_A$ with $G\times S^1$ symmetry (\ref{eqn:A}), the $g_{A,\c}$ exponential map is defined and smooth on all of $TG$.	
\end{lemma}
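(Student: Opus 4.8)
The plan is to show directly that every complexified real geodesic $\gamma_\c(i\zeta)$ is an entire function of $\zeta$, so that the map $\Phi$ of (\ref{eqn:Expdef}) is globally defined on $TG$; smoothness then follows from the smooth (indeed real-analytic) dependence of solutions of holomorphic ODE on initial conditions, together with compactness of the unit sphere bundle $SG$. By $G$-equivariance it suffices to treat geodesics $\gamma$ with $\gamma(0)=e$, and by the $S^1$-symmetry of (\ref{eqn:A}) we may further rotate the initial velocity so that $\dot\gamma(0)=\xi_0 = x_1^o\,\xi_1 + x_3^o\,\xi_3$ lies in the $\xi_1\xi_3$-plane (the analysis in section \ref{sec:complexintegrals} already did this reduction). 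The key point, established there, is that for $\lambda_1=\lambda_2$ the momentum curve $C_{a,b}$ splits: the velocity curve $v(\zeta)$ lies on $C^{\pm}_{a,b} = \q_{E,a}\cap\{z_3 = \pm x_3^o\,w\}$, a smooth conic $\p^1\subset\p^3$. Thus $v(\zeta)$, a priori a holomorphic map $\c\to C^{\text{aff},\pm}_{a,b}$, extends to a holomorphic map $\c\to\p^1$, and by removable singularities / Liouville it extends to a rational map; but a non-constant entire map $\c\to\p^1 = C^{\pm}_{a,b}$ that misses the two points at infinity is impossible, so in fact $v(\zeta)$ takes values in the affine conic $C^{\text{aff},\pm}_{a,b}\cong\c^*$. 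Either way, the essential gain is that $v(\zeta)$ cannot blow up: it remains in a fixed compact subset (the image of the closed conic) of $T^{1,0}_e G_\c$ for all $\zeta$ — more precisely, the velocity stays on the algebraic curve $C^{\pm}_{a,b}$, which has no points in $\{w=0\}$ reachable from $\c$, hence is bounded away from infinity along the solution.

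Granting that $v(\zeta)$ stays bounded in $\mfg_\c$, I would then argue that the geodesic $\gamma(\zeta)\in G_\c$ cannot escape in finite $\zeta$. The geodesic equations in the left-invariant trivialization $T^{1,0}G_\c \cong G_\c\times\mfg_\c$ read (schematically) $\dot\gamma = DL_\gamma\, v$, $\dot v = f(v)$ with $f$ a fixed quadratic (Euler) vector field on $\mfg_\c$; once $v(\zeta)$ is known to be bounded, the first equation is a linear (in $\gamma$) ODE with bounded coefficients on the complex group, whose solution exists for all time. Concretely one can say: $G_\c$ (here $SL(2,\c)$) is a closed subgroup of some $GL(N,\c)$, the left-invariant ODE $\dot\gamma = \gamma\cdot(\text{bounded matrix}(\zeta))$ on matrices has entire solutions by the standard Picard iteration / Gronwall estimate in the complex variable, and the solution stays in $G_\c$ since $G_\c$ is a subgroup. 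Therefore $\gamma_\c(i\zeta)$ is entire, $\Phi$ is defined on all of $TG$, and real-analyticity/smoothness of $\Phi$ on $TG$ is inherited from the analytic dependence of the flow on $(\gamma(0),v(0))$ ranging over the compact set $G\times SG_e$ (rescaling by $|v|$ as in the definition of $\Phi$, with the $v=0$ case being trivial).

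The main obstacle — and the step deserving the most care — is passing rigorously from "the velocity curve lies on a smooth conic missing $\{w=0\}$" to "$v(\zeta)$ is a globally defined, bounded solution." One must rule out that $v(\zeta)$ develops a singularity in finite $\zeta$ by running off to a point of $C^{\pm}_{a,b}\cap\{w=0\}$: this is exactly the Picard-type argument from section \ref{sec:complexintegrals}, but now used in the favorable direction. Since $C^{\text{aff},\pm}_{a,b}\cong\c^*$ and the Euler flow on it is the flow of a holomorphic vector field on $\c^*$ that extends holomorphically across the two ends only if it vanishes there, one checks the single end-behavior of $f$ on the conic; in the Berger-sphere case the relevant quantity is essentially $v_1^2+v_2^2$ and the flow on $C^{\pm}$ is linear in a uniformizing coordinate, hence complete on $\c$. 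I would organize this as: (i) reduce to $\xi_1\xi_3$-plane initial data; (ii) write down the velocity ODE explicitly and solve it — the $\lambda_1=\lambda_2$ symmetry makes $v_3$ and $v_1^2+v_2^2$ conserved, so the system for $v$ is essentially linear with constant (sinusoidal) coefficients and manifestly entire; (iii) feed the bounded $v(\zeta)$ into the linear matrix ODE for $\gamma(\zeta)$ in $SL(2,\c)$ and invoke entire existence; (iv) conclude $\Phi$ is globally defined and smooth by compactness of $SG$ and analytic dependence on initial conditions.
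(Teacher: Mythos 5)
Your final outline (i)--(iv) is essentially the paper's proof: by the $S^1$-symmetry the Euler equations give $\dot c=0$, so $c\equiv c_0$ and the system for $(a,b)$ becomes linear with constant coefficient $\nu=c_0(\lambda-1)$; hence $T_\c(\zeta)$ is entire, and then $\dot\gamma_\c=\gamma_\c\cdot T_\c$ is a \emph{linear} matrix ODE with entire coefficients in $SL(2,\c)\subset\c^4$, so $\gamma_\c$ is entire; smoothness of $\Phi$ follows from smooth dependence on initial conditions. So the core of your argument is correct and coincides with the paper's.

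Two of the justifications you give along the way are wrong, however, and you should excise them. First, the claim that ``a non-constant entire map $\c\to\p^1=C^\pm_{a,b}$ that misses the two points at infinity is impossible'' is false: after a M\"obius change of coordinates such a map is a nowhere-vanishing entire function, e.g.\ $e^\zeta$, and Picard's theorem permits omitting two values of $\p^1$. Indeed the paper makes exactly this point at the end of section \ref{sec:complexintegrals} --- $C^{\mathrm{aff},\pm}_{a,b}\cong\c^*$ \emph{does} admit non-constant entire maps from $\c$, which is precisely why the Picard obstruction of the generic case fails here and the $\lambda_1=\lambda_2$ case must be settled by direct computation. Second, the assertion that $v(\zeta)$ is \emph{bounded} on $\c$ is also false and, fortunately, not needed: the explicit solution involves $\cos\nu\zeta$, $\sin\nu\zeta$, which are unbounded on $\c$ (only $a^2+b^2$, not $|a|^2+|b|^2$, is conserved). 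What makes the argument work is not boundedness but linearity: a linear ODE whose coefficient matrix is an entire function of $\zeta$ has all solutions entire. Your steps (ii)--(iii) implicitly use exactly this, so the proof survives once you drop the algebraic-curve detour and the boundedness language.
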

\begin{proof} 
	Let $\gamma$ be the real geodesic on $G$ with initial conditions $\gamma(0) = e \in G$ and $\dot{\gamma}(0) = v_0 \in T_eG \cong \mfg$. We solve the geodesic equations (\ref{eqn:geodeqns1}) in the appendix \cite{web} relatively explicitly to show that the complexified geodesic $\gamma_{\c}(\zeta)$ is an entire function from $\c$ to $G_{\c}$. The other statements in the proposition follow from smooth dependence on initial conditions.
\vs
For convenience, we repeat here the equations (\ref{eqn:geodeqns1}) from \cite{web} for the tangent vector $T = a\, \xi_1+b\, \xi_2+c\, \xi_3$ to a geodesic, subject to the initial condition $T(0) = a_0 \, \xi_1+ b_0\, \xi_2 +c_0 \, \xi_3,$ for a metric of the form $g_A$,  with $A$ as in (\ref{eqn:A}):
\[
	\begin{array}{rcl}
		\dot{a} & = & -b\, c\, (\lambda - 1) \\
		&&\\
		\dot{b} & = & - a\, c\, (1 - \lambda) \\
		&&\\
		\dot{c} & = & 0,
	\end{array}
\]
where $\dot{a} = T(a)$, etc. We get that $c \equiv c_0 \in \r$, and the system reduces to 
\[
	\begin{array}{rcr}
		\dot{a} & = & -\nu\, b \\
		&&\\
		\dot{b} & = & \nu \, a, 
	\end{array}
\]
with $\nu = c_0 \, (\lambda - 1) \in \r$. Hence, 
\vs
\[
	\begin{array}{rcl} a & \equiv & a_0 \cos \nu \, t - b_0 \sin \nu \,t \\&&\\ b & \equiv & b_0 \cos \nu \,t + a_0 \sin \nu \, t, \end{array}
\]
\vs
where $t$ is the parameter of integration in the equations. Set 
\[
	T_{\c} = a(\zeta) \, \xi_{1,\c} +b(\zeta) \, \xi_{2,\c} + c_0 \, \xi_{3,\c},
\] 
which is an entire function from $\c \to \mathfrak{sl}(2,\c) \subset \c^4.$ Considering $\gamma_{\c}$ as a function from, at first, a neighborhood of $\r \subset \c$ to $G_{\c} = SL(2,\c) \subset \c^4$, we have that it satisfies the equation 
\[
	\gamma_{\c}(\zeta)^{-1} \cdot \dot{\gamma_{\c}} = T_{\c},
\]
where $\dot{\gamma}_{\c} = \frac{d\;\;}{d \zeta}$, since we are using the left-invariant parallelism to trivialize $T^{1,0}G_{\c}$. Equivalently, we have
\[
	  \dot{\gamma}_{\c} = \gamma_{\c} \cdot T_{\c},
	 \]
which is a linear equation, with $T_{\c}$ entire and known, on all of $\c$. Hence all its solutions extend to entire functions, proving the lemma.
\end{proof}
\vs
Because of lemma \ref{lem:Expdefd}, the map $\Phi$ of section \ref{sec:Gcgeom} is defined and smooth on all of $TG$. We next show that for $\lambda> 1$ this map has a singularity at 
$t_*(\lambda) \in \r_+.$
\vs
\begin{proposition}\label{prop:focus}
	Let $A$ be as in (\ref{eqn:A}) with $\lambda> 1$. Then $D\Phi$ is singular at $(e, t_*(\lambda) \xi_1) \in T_eG \cong \mfg,$ for some $t_*(\lambda) \in (0,\infty)$.	
\end{proposition}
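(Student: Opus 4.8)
By Lemma~\ref{lem:rankDPhi}, the singularity of $D\Phi$ at $(e, t\,\xi_1)$ is detected by the drop in rank of the imaginary part of the matrix $(a_{i,j}(it))$, where the $a_{i,j}$ are the coefficients expressing the real Jacobi fields $Y_i$ (with $Y_i(e)=0$, $\nabla_{\dot\gamma}Y_i(e)=\eta_{X_i}$) in the right-invariant frame along the geodesic $\gamma(t)=\exp(t\,\xi_1)$. So the plan is: (i) identify the geodesic $\gamma(t)=\exp(t\,\xi_1)$ explicitly — since $\xi_1$ is an eigenvector of $A$ with $v_0=\xi_1$ having $c_0=0$, the computation in Lemma~\ref{lem:Expdefd} shows this is an honest one-parameter subgroup, so $\gamma_\c(\zeta)=\exp(\zeta\,\xi_1)$ is entire; (ii) write down the Jacobi equation along $\gamma$ in the left-invariant frame, using the structure constants of $\mathfrak{su}(2)$ and the curvature of $g_A$ recorded in the appendix; (iii) solve it — the Jacobi operator will have constant coefficients in the left-invariant frame along a one-parameter subgroup, so the solutions are trigonometric/exponential; (iv) convert from the left-invariant frame to the right-invariant frame via \eqref{eqn:xi2eta}, i.e. conjugate by $\mathrm{Ad}(\exp(-t\,\xi_1))$, which is just a rotation in the $\xi_2,\xi_3$ plane, to read off the $a_{i,j}(t)$; (v) complexify $t\mapsto it$ and find the first $t_*>0$ at which $\mathrm{Im}\,(a_{i,j}(it))$ drops rank.

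Concretely, I expect the Jacobi fields along $\gamma=\exp(t\,\xi_1)$ to decouple into three scalar modes. The tangential mode ($Y$ parallel to $\dot\gamma$) grows linearly and contributes an $a(t)$ of the form $t$ (or $\sin$-free), giving $\mathrm{Im}(a(it))$ a pure $it$-type term that never vanishes — this is the ``$d$'' in \eqref{eqn:rank}, already accounted for by the first, always-injective, block of $D\Phi$. The two transverse modes, in the $\xi_2$ and $\xi_3$ directions, will satisfy $\ddot Y + K\,Y=0$ for sectional curvatures $K$ computed from $A=\mathrm{diag}(1,1,\lambda)$; the relevant sectional curvature for the plane spanned by $\xi_1$ and (roughly) $\xi_3$ will turn out to be positive and strictly greater than that of the round sphere when $\lambda>1$ — this is exactly the regime where Lempert–Sz\H oke-type focusing bites. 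Solving $\ddot Y+K Y=0$ with $Y(0)=0$ gives $Y(t)=\frac{1}{\sqrt K}\sin(\sqrt K\,t)$; after the $\mathrm{Ad}$-twist and the substitution $t\mapsto it$, one gets (a combination of) $\frac{1}{\sqrt K}\sinh(\sqrt K\, t)$ and $\cos/\cosh$ terms, and the imaginary-part matrix degenerates at the first $t_*$ where the determinant of the relevant $2\times 2$ imaginary block vanishes. The key point is that for $\lambda>1$ this $t_*$ is finite, whereas for the round sphere ($\lambda=1$) the curvatures arrange so that no such degeneracy occurs (consistent with Proposition~\ref{prop:poscase}); and as $\lambda\searrow 1$, $K\to$ the round value and $t_*\to\infty$.

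The main obstacle, I expect, is bookkeeping rather than ideas: getting the curvature tensor and the Jacobi operator of $g_A$ right in the left-invariant frame (the metric is not bi-invariant, so $\nabla$ has the extra terms from the $O'Neill$/Koszul formula with $A\ne I$), and then correctly tracking which scalar mode corresponds to which column of $(a_{i,j})$ after the conjugation by $\mathrm{Ad}(\exp(-t\xi_1))$. One has to be careful that the singularity is a genuine rank drop of the \emph{imaginary} part and not cancelled by the real part — but Lemma~\ref{lem:rankDPhi} already isolates exactly the imaginary part, so once the $a_{i,j}(it)$ are in hand the conclusion is a one-line determinant computation. A secondary subtlety is the choice of geodesic direction: the proposition fixes $\dot\gamma(0)=\xi_1$ (an eigendirection with the degenerate eigenvalue), which is the convenient choice because then $\gamma$ is a subgroup and the Jacobi equation has constant coefficients; had we taken $\dot\gamma(0)=\xi_3$ the analysis would be similar but I would still reduce to this case by the symmetry of the problem.

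\medskip
\noindent\textit{Summary of the plan:} reduce via Lemma~\ref{lem:rankDPhi} to a rank computation for $\mathrm{Im}\,(a_{i,j}(it))$; compute the Jacobi operator along the one-parameter subgroup $\exp(t\xi_1)$ using the appendix's curvature data; solve the (constant-coefficient) Jacobi equation; pass to the right-invariant frame by the $\mathrm{Ad}$-twist \eqref{eqn:xi2eta}; complexify and locate the first zero $t_*(\lambda)\in(0,\infty)$ of the determinant of the transverse imaginary block, checking that it is finite precisely when $\lambda>1$ and tends to $+\infty$ as $\lambda\searrow 1$.
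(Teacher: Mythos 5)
Your overall architecture is exactly the paper's: reduce via Lemma \ref{lem:rankDPhi} to the rank of $\mathrm{Im}\,(a_{i,j}(it))$, solve the Jacobi equation along $\gamma(t)=\exp(t\,\xi_1)$ in the left-invariant frame, twist by $\mathrm{Ad}(\exp(-t\xi_1))$ as in (\ref{eqn:xi2eta}), substitute $t\mapsto it$, and locate the first zero of the $2\times 2$ transverse determinant. However, there is a genuine gap in step (iii): the transverse Jacobi equation along $\exp(t\xi_1)$ does \emph{not} decouple into scalar modes $\ddot Y+KY=0$ governed by sectional curvatures. Because $\nabla_{\xi_1}\xi_2$ and $\nabla_{\xi_1}\xi_3$ are nonzero (see (\ref{eqn:covder})), the system for $Y=f\xi_2+h\xi_3$ is the coupled first-order-in-derivative system (\ref{eqn:Jac1sys}), namely $f''-\lambda h'+(1-\lambda)f=0$, $h''+f'=0$. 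Its general solution (\ref{eqn:Jac1soln}) contains a constant mode in $f$ which forces a \emph{secular} term $\tfrac{1-\lambda}{\lambda}\,c\,t$ in $h$; this resonance, not curvature focusing, is what produces the singularity. Indeed, if your decoupled picture were correct, each diagonal entry of the complexified matrix would be a multiple of $\sinh(\sqrt{K}\,t)$ with $K>0$, the determinant would never vanish for $t>0$, and you would conclude (wrongly) that there is no focal point. Note also that for $1<\lambda\le 4/3$ all sectional curvatures of $g_A$ are positive (they lie in $[4-3\lambda,\lambda]$), so ``curvature bigger than round, hence Lempert--Sz\H{o}ke-type focusing'' cannot be the mechanism.

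What actually happens in the paper is that after the $\mathrm{Ad}$-twist and complexification the imaginary-part matrix is diagonal with entries $\sinh t$ and $\lambda\bigl[\sinh t-\tfrac{\lambda-1}{\lambda}\,t\cosh t\bigr]$, and the second entry vanishes precisely when $\tfrac{\lambda-1}{\lambda}\,t=\tanh t$, equation (\ref{eqn:tstar}); since the line has slope $\tfrac{\lambda-1}{\lambda}\in(0,1)$ for $\lambda>1$ while $\tanh$ has slope $1$ at the origin and saturates, there is a unique $t_*(\lambda)\in(0,\infty)$, and $t_*(\lambda)\to+\infty$ as $\lambda\searrow 1$ as you predicted. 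So your plan would succeed once you carry out the actual computation honestly, but as written the claimed form of the solutions and the claimed degeneracy mechanism are incorrect, and the step ``find the first zero of a product of $\sinh$'s'' would fail. One smaller point: your closing remark that the case $\dot\gamma(0)=\xi_3$ reduces to $\dot\gamma(0)=\xi_1$ ``by symmetry'' is not right either --- the metric distinguishes $\xi_3$, and along $\exp(t\xi_3)$ the analysis is genuinely different.
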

\begin{proof}
	By lemma \ref{lem:rankDPhi}, to find singularities of $D\Phi$ at points $t v \in T_eG$, it suffices to \vs
	\begin{enumerate}
		\item Solve the Jacobi equation $\mathcal{J}_1(Y) = 0, Y = f \,\xi_2 + h\, \xi_3,$ along $\gamma$, the geodesic in $G$ with initial condition $\gamma(0) = e \in G$, and $\dot{\gamma}(0) = \xi_1 \in T_eG \cong \mfg$.\vs\vs
		\item Set $Y_i = f_i\, \xi_2 + h_i \, \xi_3, i =2, 3,$ where $f_i(0) = h_i(0) = 0,$ and \vs 
			\begin{equation}\label{eqn:Yinit}
				\left(\begin{array}{cc} f_2'(0)& h_2'(0) \\& \\f_3'(0) & h_3'(0) \end{array}\right) = \left(\begin{array}{cc} 1 & 0 \\& \\ 0 & 1\end{array}\right).
			\end{equation}\vs\vs
		\item Express $Y_{i,\c} = \alpha_i\, \eta_{2,\c}^{1,0} + \beta_i \, \eta_{3,\c}^{1,0}, i = 2,3.$\vs\vs
		\item Evaluate \[\Delta(t) = \det Im \left(\begin{array}{cc} \alpha_2(i t) & \beta_2(i t) \\ & \\ \alpha_3(i t) & \beta_3(i t) \end{array} \right).\] Solve $\Delta(t) = 0$.
	\end{enumerate}\vs\vs
We will show that $\Delta(t) = 0$ has a unique root $t_*(\lambda) \in (0, +\infty)$, proving the proposition.
\vs\vs
Step (1) is calculated in section \ref{sub:JacA}, (\ref{eqn:Jac1soln}):
	\[
	\begin{array}{rcl}
		f & = & a \cos t + b \sin t + c\\&&\\h & = &  b \cos t - a \sin t + \frac{1-\lambda}{\lambda} c t + d, 
	\end{array}
	\]
where $a, b, c ,d \in \r$. (2) says, first, that $c = - a, d = - b,$ and further the first order initial conditions say
\vs
\[
\begin{array}{rcccl} f'_2(0) & = & b & =& 1 \\&&&&\\h'_2(0) & =& - \frac{a}{\lambda} & = & 0\end{array}
\]
\vs
and
\vs
	\[
	\begin{array}{rcccl}f'_3(0) & = & b & = & 0\\ &&&&\\ h'_3(0) & = & - \frac{a}{\lambda} & =& 1.\end{array}
	\]
	\vs
Hence, in the basis $\xi_2, \xi_3$ for normal fields to $\gamma$, we have
\vs
\begin{equation}\label{eqn:fhsols}
		\left(\begin{array}{cc} f_2 & f_3 \\&\\ h_2 & h_3 \end{array}\right) = \left(\begin{array}{cc} \sin t & - \lambda (\cos t - 1) \\&\\  \cos t - 1& \lambda \sin t + (1 - \lambda) t \end{array}\right).
\end{equation}
\vs
As to step (3), as  in $(\ref{eqn:xi2eta})$ we have at $g \in \exp t \xi_1$ the change of basis from the $\xi$'s to the $\eta$'s is given by the action $Ad(- t \xi_1)$, explicitly:
\vs
\[
	\begin{array}{rcccl}
		\eta_2 & = & Ad(\exp(- t \xi_1)) \, \xi_2 & = & \cos t \; \xi_2 + \sin t \; \xi_3, \\
			&&&&\\
		\eta_3 & = & Ad(\exp(- t \xi_1)) \xi_3 & = & - \sin t \; \xi_2 + \cos t \; \xi_3.
	\end{array}
\]
\vs
so that we have 
\vs
\[
	\begin{array}{rcl}
		\left(\begin{array}{cc} \alpha_2 & \alpha_3\\ & \\ \beta_2 & \beta_3 \end{array}\right) & = & \left( \begin{array}{cc} \cos t & - \sin t \\& \\ \sin t & \cos t \end{array}\right) \cdot  \left(\begin{array}{cc} \sin t & - \lambda (\cos t - 1) \\&\\  \cos t - 1& \lambda \sin t + (1 - \lambda) t \end{array}\right) \\ && \\
		& = & \left(\begin{array}{cc} \sin  t & \lambda [(\cos t - 1) + \frac{\lambda - 1}{\lambda} \, t \sin t] \\ & \\ - (\cos t -1) & \lambda [\sin t - \frac{\lambda - 1}{\lambda} \, t \cos t] \end{array}\right). 
	\end{array}
\]
\vs
It follows directly that
\vs
\[ 
	Im \left(\begin{array}{cc} \alpha_2(i t) & \alpha_3(i t) \\ & \\ \beta_2(i t) & \beta_3(i t)\end{array}\right) = \left(\begin{array}{cc} \sinh t & 0 \\ & \\ 0 & \lambda [\sinh t - \frac{\lambda - 1}{\lambda} \, t \cosh t] \end{array}\right).
\]
\vs
Finally, we have $\Delta(t) = 0$ if and only if 
\vs
\begin{equation}\label{eqn:tstar}
	\frac{\lambda - 1}{\lambda} \, t = \tanh t.
\end{equation}
\vs
It is clear pictorially or from an elementary calculus computation that, if $\lambda >1$, there is a unique $t_*(\lambda) > 0$ satisfying (\ref{eqn:tstar}), as claimed
\end{proof}
%
%
%
%
%
%
%
%
%
%
%
%
\section{Remarks and Questions}\label{sec:rmks}
There are a couple of questions related to the present paper, beyond the general one of the construction and interpretation of geometries with entire tubes. Can one carry out such a study as here for groups other than $SU(2)$? It would seem to require integrability with especially simple integrals, such as the quadratic polynomials here. It may also be possible to determine the finite radius of the tubes for $\lambda \in (1,+\infty)$ using the explicit solution for the tangent vector along a geodesic for such geometries, as in section \ref{sec:U2cases} above. One might also find an estimate for the tube radius using an approximation to the constant negative curvature metrics on the affine elliptic curves given by the complex integrals of motion and the Ahlfors-Schwarz-Pick inequality. It is unclear how such bounds would compare to the Lempert-Sz\H{o}ke curvature bound when both bounds apply. 
\vs
Finally, can anything interesting be said of the case of a compact Riemannian homogeneous space $G/H$ which is not normal, i.e., not the Riemannian quotient of a symmetric metric on $G$? A related question would be whether Gelfand-Cetlin quantum integrable systems exist on flag manifolds, which contain the Laplacian for a non-normal homogeneous metric. 
%
%
%
%
%
%
%
%
%
%
%
%
%
%
%
%
%
%
%
%

%
%
%
%
%
%
%
%
%
%
%
%
%
%
%
%
\section{Appendix: Riemannian geometry details for $SU(2)$} \label{sec:app}
%
%
We insert here for easy reference several well-known and standard computations useful in the paper, first for the left invariant geometries on $G$ and then for their complexifications on $G_{\c}$. We will compute the Levi-Civita connection of $g_A$ on $G = SU(2)$, the geodesic equations, the Riemann curvature tensor and the Jacobi equations, the latter two for the more symmetric metrics as in (\ref{eqn:A}).
%
%
%
%
%
%
%
\subsection{The Levi-Civita connection of $g_A$ on $G = SU(2)$} \label{sub:LCA} We let $(\cdot,\cdot) = -B(\cdot,\cdot)$ be the invariant form on $\mfg$, and we view it as a metric on all of $G$ by left-invariance. We will compute the Levi-Civita connection of $g_A$ on $G = SU(2)$, the geodesic equations. Since all of these objects will be left-invariant, they will amount to multilinear forms on $\mfg$, by computing them with respect to a left-invariant frame of $TG$, equivalently a basis for $\mfg$. The case of $\mfg = \mathfrak{su}(2)$ is particularly simple in this regard, since we can find an orthonormal basis for $(\cdot,\cdot)$ consisting of $\xi_i, i = 1,2,3,$ with brackets given by
\begin{equation} \label{eqn:structureconstants}
	[\xi_i, \xi_{i+1}] = \xi_{i+2},
\end{equation}
where the indices are read$\mod 3$, i.e., permuted cyclically. In fact, if we are given $A$ as above, we can assume this basis also diagonalizes $A$ on $\mfg$:
\begin{equation} \label{eqn:eigenbasis}
	A(\xi_i) = \lambda_i \, \xi_i, i = 1,2,3, \,\text{with} \, 0 < \lambda_1 \leq \lambda_2 \leq \lambda_3,
\end{equation}
Thinking of $\xi, \eta \in \mfg$ as left-invariant fields on $G$, we want to compute the Levi-Civita connection $\naba$ for the metric $g_A = (\cdot,\cdot)_A = (A\cdot,\cdot)$ applied to $\xi, \eta$, that is, $\nabla_{\xi}\, \eta.$ Let us first recall the standard formula for the Levi-Civita connection for an arbitrary Riemannian manifold, where momentarily, $(\cdot,\cdot)=g$ is arbitrary and $\nabla$ its Levi-Civita connection. Let $X, Y, Z$ be arbitrary smooth vectorfields, then $\nabla$ is determined by:
\begin{equation} \label{eqn:LC}
	\begin{array}{rcl}
		(\nabla_X\, Y, Z) & = & \frac12 \{X(Y, Z) - Z(X, Y) + Y(Z, X) +\\
					&	&	\\
					&	&  (Z, [X, Y]) + ([Z, X], Y) + (X, [Z, Y]) \}. 
	\end{array}
\end{equation}
Specializing now to our case of a left-invariant metric on $G$, we get
\begin{equation} \label{eqn:LCG}
	(\naba_{\xi}\, \eta, \zeta)_A = \frac12\{ (\zeta, [\xi, \eta])_A + ([\zeta, \xi], \eta)_A  + (\xi, [\zeta, \eta])_A\},
\end{equation}
where $\xi, \eta, \zeta$ are left-invariant fields in $\mfg$. In particular, if $G = SU(2)$, we can use the basis $\xi_1, \xi_2, \xi_3$ from (\ref{eqn:structureconstants}) and (\ref{eqn:eigenbasis}) to get explicitly:
\begin{equation} \label{eqn:LCG1}
	\naba_{\xi_j}\, \xi_i = \sum_k a_{i; j}^k \xi_k,
\end{equation}
where the $a_{i; j}^k$ are real constants given by
\begin{equation} \label{eqn:1pggeod}
	\begin{array}{rcl}
		0 	& = & \naba_{\xi^1}\xi_1 = \naba_{\xi_2}\xi_2 = \naba_{\xi_3}\xi_3 , 
	\end{array}
\end{equation}
and		 
\begin{equation} \label{eqn:covder}
	\begin{array}{rcl}
		\naba_{\xi_1}\xi_2 & = & \frac1{2\lambda_3}\{-\lambda_1 + \lambda_2 + \lambda_3\} \, \xi_3 \\
			&	&	\\
		\naba_{\xi_1}\xi_3 & = & \frac1{2 \lambda_2} \{\lambda_1 - \lambda_2 - \lambda_3\}\, \xi_2 \\
			&	&	\\
		\naba_{\xi_2} \xi_1 & = & \, \frac1{2 \lambda_3} \{-\lambda_1 +\lambda_2 - \lambda_3\} \, \xi_3 \\
			&	&	\\
		\naba_{\xi_2} \xi_3 & = & \frac1{2 \lambda_1} \{\lambda_1 - \lambda_2 + \lambda_3\} \, \xi_1\\
			&	&	\\
		\naba_{\xi_3} \xi_1 & = & \frac1{2 \lambda_2}\{\lambda_1 + \lambda_2 - \lambda_3\} \, \xi_2 \\
			&	&	\\
		\naba_{\xi_3} \xi_2 & = & \, \frac1{2 \lambda_1} \{-\lambda_1 - \lambda_2 + \lambda_3\} \, \xi_1
	\end{array}
\end{equation} 
%
%
%
%
%
%
\subsection{The geodesic equations.} We next write out the geodesic equations for the tangent vector $T$ to a geodesic, $\nabla_TT = 0$ on the real space $SU(2)$. 
Write $T = a \xi_1 + b \xi_2 + c \xi_3$. We shorten $\nabla^A$ to simply $\nabla$.Then
\[\begin{array}{cclccccc}
	\nabla_TT & = &  & \dot{a} \xi_1 & + & \dot{b} \xi_2 & + & \dot{c} \xi_3 \\
	&&&&&&&\\
	&& + & \dot{a} \xi_1 & + & \dot{b} \xi_2 & + & \dot{c} \xi_3 \\
	&&&&&&&\\
	&& + & \dot{a} \xi_1 & + & \dot{b} \xi_2 & + & \dot{c} \xi_3 \\
	&&&&&&&\\
	&& + & a^2 \nabla_{\xi_1}\xi_1  & + & ab \nabla_{\xi_1}\xi_2  & + & ac \nabla_{\xi_1}\xi_3 \\
	&&&&&&&\\
	&& + & ba \nabla_{\xi_2}\xi_1 & + & b^2 \nabla_{\xi_2}\xi_2 & + & bc \nabla_{\xi_2}\xi_3 \\
	&&&&&&&\\
	&& + & ca \nabla_{\xi_3}\xi_1 & + & cb \nabla_{\xi_3}\xi_2 & + & c^2 \nabla_{\xi_3}\xi_3,	
	\end{array}\]
where $\dot{a} = T(a)$, etc. Use (\ref{eqn:1pggeod}) and (\ref{eqn:covder}) to reduce these to
\begin{equation}\label{eqn:geodeqns}
	\begin{array}{lcr}
		\dot{a} & = & - \frac{bc}{\lambda_1}(\lambda_3 - \lambda_2),\\
		&&\\
		\dot{b} & = & - \frac{ac}{\lambda_2}(\lambda_1 - \lambda_3),\\
		&&\\
		\dot{c} & = & - \frac{ab}{\lambda_3}(\lambda_2 - \lambda_1).
	\end{array}
\end{equation}
A special case of interest is when $\lambda_1 = \lambda_2 = 1, \lambda_3 := \lambda,$ where the equations reduce to 
\begin{equation}\label{eqn:geodeqns1}
	\begin{array}{rcl}
		\dot{a} & = & -bc(\lambda - 1) \\
		&&\\
		\dot{b} & = & - ac(1 - \lambda) \\
		&&\\
		\dot{c} & = & 0.
	\end{array}
\end{equation}
%
%
%
%
%
%
%
%
%
%
\subsection{Riemann curvature of $g_A$.} \label{sub:RCA} Next, we describe the curvature tensor of $g_A$, where $A$ is as in (\ref{eqn:A}), and for $A = I, |\xi_i|^2 = \frac14.$ Thus, $|\xi_i|^2_{\lambda} = \frac14, i = 1,2,$ and $|\xi_3|^2_{\lambda} = \frac{\lambda}4.$  We view the Riemann curvature tensor as a symmetric operator also denoted $Riem: \wedge^2\mathfrak{g} \to \wedge^2 \mfg$. From the additional $S^1$ invariance of our metric, we conclude that the eigenspaces of $K$ are $V_m := \{ \r \xi_1 \wedge \xi_2\}$ and $V_M = \{ \r \xi_1 \wedge \xi_3 + \r \xi_2 \wedge \xi_3\}$, and where the eigenvalues are given by the sectional curvatures $K(\xi_1\wedge\xi_2)$, $K(\xi_1, \xi_3) = K(\xi_2, \xi_3)$, respectively. It follows that $Riem(\xi_i, \xi_j) \xi_k = 0,$ if $i, j, k$ are distinct. One calculates the sectional curvatures as follows:
\begin{equation} \label{eqn:secK}
	K(\xi_i \wedge \xi_j) = \frac{(Riem(\xi_i, \xi_j) \xi_j, \xi_i)}{|\xi_i \wedge \xi_j|^2}.
\end{equation}
For the case of $\xi_1 \wedge \xi_2$, we have 
\[
	\begin{array}{rcl}
		\nabla_{\xi_2} \circ \nabla_{\xi_1} \xi_1 & = & 0, \\
			&	&	\\
		\nabla_{\xi_1} \circ \nabla_{\xi_2} \xi_1 & = & - \nabla_{\xi_1} \frac12 \xi_3 \\
		&	&	\\
			& = & \frac{\lambda}4 \xi_2, \; \text{and} \\
			&	&	\\
		\nabla_{[\xi_2, \xi_1]} \xi_1 & = & - \nabla_{\xi_3} \xi_2 \\
		& & \\
			& = & (\frac{\lambda}2 - 1)	 \xi_2,
	\end{array}
\]
or,
\begin{equation} \label{eqn:R12}
	Riem(\xi_2, \xi_1) \xi_1 = (1 - \frac34 \lambda) \xi_2.
\end{equation}
\vs
and therefore
\begin{equation} \label{eqn:K12}
	K(\xi_1 \wedge \xi_2) = \frac{(Riem(\xi_2, \xi_1) \xi_1, \xi_2)}{|\xi_1\wedge \xi_2|^2} = 16 \cdot (1- 3 \frac{\lambda}4) \cdot \frac14 = 4 - 3 \lambda.
\end{equation}
Similar calculations show that
\begin{equation}\label{eqn:K31}
	R(\xi_3, \xi_1) \xi_1 = \frac{\lambda}4 \xi_3.
\end{equation}
\begin{equation} \label{eqn:R13}
	Riem(\xi_1, \xi_3) \xi_3 = \frac{\lambda^2}4 \xi_1.
\end{equation}
\begin{equation}\label{eqn:R23}
		Riem(\xi_2, \xi_3) \xi_3 = \frac{\lambda^2}4 \xi_2.
\end{equation}
As a result, we get
\begin{equation} \label{eqn:K13}
	K(\xi_1 \wedge \xi_3) = K(\xi_2 \wedge \xi_3) = \frac{\frac{\lambda^2}4}{\frac{\lambda}4} = \lambda.
\end{equation}
We conclude that the sectional curvatures of $S^3, g_A$ lie in $[\lambda, 4 - 3 \lambda]$, for $\lambda \in (0, 1]$, and they lie in $[4 - 3\lambda, \lambda]$, for $\lambda \geq 1$. Thus the sectional curvatures for $g_A$ are positive for $\lambda \leq \frac43$.
\vs
%
%
%
%
%
%
%
\subsection{The Jacobi operator for $g_A$} \label{sub:JacA} In general, if $\gamma$ is a Riemannian geodesic with $\dot{\gamma} := T$, so that $\nabla_T T = 0$, the Jacobi operator $\mathcal{J}$ associates to a vectorfield $Y$ along $\gamma$ and normal to $\gamma$ the normal field along $\gamma$ given by
\[
	\mathcal{J}(Y) := \nabla_T^2 Y + Riem(Y, T) T.
\]
We will calculate this operator when $T = \xi_1$. 
%
First calculate $\mathcal{J}(\xi_2)$ along $\gamma$ with $\dot{\gamma} = T = \xi_1$.
Call this particular Jacobi operator $\mathcal{J}_1$. 
We have, by equations (\ref{eqn:covder}, \ref{eqn:R12}), 
\[
	\begin{array}{rcl}
		\mathcal{J}_1(\xi_2) & = & \nabla_{\xi_1}^2 \, \xi_2 + Riem(\xi_2, \xi_1) \, \xi_1 \\
			& & \\
			& = & \nabla_{\xi_1} \, \frac12 \xi_3 + (1 -\frac34 \, \lambda) \, \xi_2 \\
			&	&	\\
			& = & - \frac{\lambda}4 \xi_2\, + (1- \frac34 \, \lambda) \xi_2 \\
			&	&	\\
			& = & (1 - \lambda) \xi_2
	\end{array}
\]
and so
\begin{equation} \label{eqn:Jac12}
	\mathcal{J}_1(\xi_2) =  (1 - \lambda) \, \xi_2,
\end{equation}
while
\begin{equation}\label{eqn:Jac13}
	\mathcal{J}_1(\xi_3) = 0.
\end{equation}
since $\xi_3$, being tangent to the $S^1$ symmetry, is a Jacobi field along $\gamma$.
\vs\vs
A normal Jacobi field $Y$ along $\gamma$ is a field along $\gamma$ which may be written $Y =f\, \xi_2 + h \,\xi_3$ which satisfies $\mathcal{J}_1(Y) = 0$. This gives us a $2 \times 2$ system of coupled ODEs, using (\ref{eqn:Jac12}, \ref{eqn:Jac13}), as well as (\ref{eqn:covder}) :
\vs
\begin{equation} \label{eqn:Jac1sys}
	\begin{array}{rcl}
		f'' - \lambda h' + (1 - \lambda) \, f & = & 0 \\
			&	&	\\
		h'' + f'  & = & 0.
	\end{array}
\end{equation}
\vs
Differentiating the first equation of (\ref{eqn:Jac1sys}), setting $F := f'$, and substituting for $h''$ using the second equation of (\ref{eqn:Jac1sys}), we get
\vs
\[
	F'' + F = 0,
\]
\vs
so 
\begin{equation}\label{eqn:f}
	f =a \cos t + b \sin t + c,
\end{equation}
\vs
with $a, b, c \in \r$. The second equation in (\ref{eqn:Jac1sys}) then says 
\vs
\[
	h = - a \cos t + b \sin t + \tilde{c}\; t + d, \;\; \tilde{c}, d \in \r.
\]
\vs
Now use the first equation in (\ref{eqn:Jac1sys}) to solve for $\tilde{c} = \frac{1-\lambda}{\lambda} \, c$, to obtain, finally,
\vs
\begin{equation}\label{eqn:Jac1soln}
	\begin{array}{rcl} f & = & a \cos t + b \sin t + c, \;\; \text{and} \\ && \\ h & = & b \cos t - a \sin t + \frac{1- \lambda}{\lambda} \, c \, t + d. \end{array}
\end{equation}
\vs\vs\vs
\end{document}